\documentclass[11pt]{amsart}
\usepackage[a4paper,top=2.5cm,bottom=2.5cm,left=2.5cm,right=2.5cm]{geometry}

\usepackage{amsmath}
\usepackage{amsthm}
\usepackage{amssymb}

\usepackage{breqn}

\usepackage{graphicx}
\usepackage[colorlinks=true, allcolors=blue]{hyperref}

\newtheorem{theorem}{Theorem}[section]
\newtheorem{proposition}[theorem]{Proposition}
\newtheorem{lemma}[theorem]{Lemma}

\newtheorem{conjecture}[theorem]{Conjecture}

\newtheorem*{remark}{Remark}

\title{The Alexander polynomial of twisted torus knots in the complementary range}

\author{Adnan}
\address{Department of Mathematics, Kangwon National University, Chuncheon-si, Gangwon-do, 24341, Republic of Korea}
\email{adnanshahab35@kangwon.ac.kr}

\author{Kyungbae Park}
\address{Department of Mathematics, Kangwon National University, Chuncheon-si, Gangwon-do, 24341, Republic of Korea}
\email{kyungbaepark@kangwon.ac.kr}

\subjclass[2020]{57K10, 57K14}

\begin{document}

\begin{abstract}
A twisted torus knot $T(p,q;r,s)$ is obtained from the flat-torus diagram
of $T(p,q)$ by inserting $s$ full twists along a disk meeting $r$ adjacent
strands.  Our previous work determined its Alexander polynomial for
$1<r\leq\max\{|p|,|q|\}$.  In this paper, we compute it in the
complementary range
$\max\{|p|,|q|\}<r\leq|p|+|q|$ by placing the knot on a standard genus-two
Heegaard surface and applying Fox calculus.  We then combine the two
calculations into a unified Alexander-polynomial formula valid for
$1<r\leq|p|+|q|$.  In the complementary range, we prove monicity and give an explicit
formula for the breadth.  This determines the Seifert genus for
nonnegative twists and gives a genus lower bound for negative twists.  We
also exhibit infinite two-parameter families of non-$L$-space knots in the
complementary range.
\end{abstract}

\keywords{Twisted torus knots, knot group, Alexander polynomial, Fox calculus.}

\maketitle

\section{Introduction}
Let $p$ and $q$ be relatively prime integers and let $s\in\mathbb Z$.  A
twisted torus knot $T(p,q;r,s)$ is obtained from the flat-torus diagram of
the torus knot $T(p,q)$ by inserting $s$ full twists along a disk meeting
$r$ adjacent strands, where the original genus-two tangle construction
allows $1<r\leq |p|+|q|$~\cite{Dean-1996,Dean-2003}.  See
Figure~\ref{fig:intro-genus-two} for the example $T(7,5;9,1)$ on a
standard genus-two Heegaard surface.  Introduced in the
study of Seifert fibered Dehn surgery, twisted torus knots have since
found many applications in low-dimensional topology.  Representative
examples include hyperbolic knot theory
\cite{CDW-1999,CKP-2004}, Heegaard splittings
\cite{Moriah-Sedgwick-2009}, and $L$-space surgery
\cite{Vafaee-2015,CGHV-2016,Tran-2020}.

We use the following elementary symmetries to normalize the parameters.
We write $K\sim K'$ if $K$ and $K'$ are ambient isotopic as unoriented knots
in $S^3$.  Exchanging the two solid tori bounded by the flat torus, reversing
the orientation of the torus knot, and reflecting $S^3$, respectively, give
\begin{equation*}
T(p,q;r,s)\sim T(q,p;r,s),\qquad
T(-p,-q;r,s)\sim T(p,q;r,s),\qquad
\overline{T(p,q;r,s)}\sim T(p,-q;r,-s).
\end{equation*}
These symmetries follow directly from the geometric definition and are used
with the same conventions in~\cite{Lee-2015,Lee-2021}; see
also~\cite{CFKNP-2011}.  Since the Alexander polynomial is unchanged by
reversing orientation or taking the mirror image, we assume
throughout that $p>q>0$, while retaining arbitrary $s\in\mathbb Z$.  We call
$T(p,q;r,s)$ \emph{positively twisted} when $s>0$.

When $r\leq p$, the twisting operation is visible in the standard closed
$p$-braid diagram.  Our previous formula treats $1<r<p$
\cite{Park-Adnan-2025-01}; adjoining the elementary torus-knot case $r=p$
gives the range $1<r\leq p$.  This restriction belongs to the braid model
rather than to the definition.
Dean's construction includes the full range up to $p+q$, and
Champanerkar, Futer, Kofman, Neumann, and Purcell likewise assume
$p,q>0$ and allow $0<r\leq p+q$ in their generalized
twisted-torus-link construction
\cite{CFKNP-2011}.  Moreover, the range $p<r\leq p+q$ occurs explicitly
in questions about knot type and braid index: Lee constructed closed braids
for $r>p$~\cite{Lee-2015} and later studied precisely
$1<q<p<r\leq p+q$~\cite{Lee-2021}.  Thus the general range is both natural
and geometrically relevant, but its Alexander polynomial is not directly
accessible from the standard $p$-strand braid presentation.

\begin{figure}[t]
    \centering
    \includegraphics[width=0.8\textwidth]{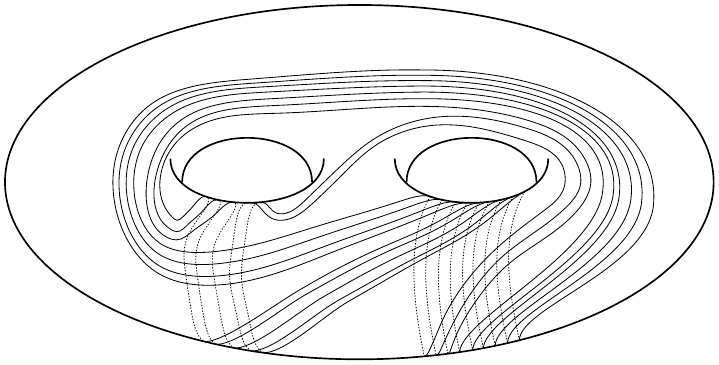}
    \caption{The twisted torus knot $T(7,5;9,1)$ on a standard genus-two
    Heegaard surface.}
    \label{fig:intro-genus-two}
\end{figure}

For $1<r\leq p$, the classical braid description is the closure of
\[
(\sigma_1\cdots\sigma_{p-1})^q
(\sigma_1\cdots\sigma_{r-1})^{rs}.
\]
Morton computed the Alexander polynomial for $r=2$ using the Burau
representation and the skein relation~\cite{Morton-2006}.  For $r=2$, the other diagrams in the skein relation remain twisted
torus knots or links, allowing a recurrence within that class.  For
$r>2$, these diagrams need not be twisted torus knots or links, so the
same recurrence does not directly apply.  Related work
includes Vafaee's computation of knot Floer homology for certain families
with $p=qk\pm1$~\cite{Vafaee-2015} and the computation of the Jones
polynomial for $r=2$~\cite{BD-2023}.  The genus-two approach used here
replaces the braid restriction by a direct calculation of the inclusion
words and their Fox derivatives.

The purpose of this paper is to complete and unify the Alexander-polynomial
calculation.  Write $f\doteq g$ for equality up to multiplication by a
unit $\pm t^k$ in $\mathbb Z[t^{\pm1}]$.  For a positive integer $n$, let
$[a]_n$ denote the least nonnegative residue of an integer $a$ modulo $n$.
For the main formula, assume $p>q>1$, $\gcd(p,q)=1$, and
$1<r\leq p+q$, and let $p^{-1}$ denote the inverse of $p$ modulo $q$.  Set
\begin{equation}\label{eq:unified-indices}
j_*=[-rp^{-1}]_q,
\qquad
i_*=p+1-\frac{pj_*+r}{q}.
\end{equation}
Here $0\leq j_*<q$ and $\ell_*=(pj_*+r)/q$ is an integer satisfying
$1\leq\ell_*\leq p+1$, since $0<pj_*+r\leq q(p+1)$.  Thus
$0\leq i_*\leq p$; equality $i_*=0$ requires equality in the upper bound,
which occurs exactly when $j_*=q-1$ and $r=p+q$.
For $0\leq j<q$ and $0\leq i<p$, define the
residue-block lengths
\[
\mu_j=[-jp]_q,\qquad
L_j=\left\lfloor\frac{r-1-\mu_j}{q}\right\rfloor+1,
\]
\[
\tau_i=[-iq]_p,\qquad
M_i=\left\lfloor\frac{r-1-\tau_i}{p}\right\rfloor+1.
\]
Write
\[
\overline L_m=\sum_{j=0}^mL_j,\qquad
\overline M_m=\sum_{i=0}^mM_i,
\qquad
\overline L_{-1}=\overline M_{-1}=0,
\]
and define the adjusted block lengths by
\[
\widehat M_0=M_0-1,\qquad
\widehat M_i=M_i\quad(1\leq i<p).
\]
The adjustment affects only the endpoint $r=p+q$: for $r<p+q$, we have
$i_*\geq1$, so all block lengths appearing in $E_a$ and $Z_a$ below
are unchanged.  Set
\[
E_a=q(p-i_*+1)+rs\sum_{i=i_*}^{p-1}\widehat M_i,
\qquad
E_c=r^2s+pq.
\]

\begin{theorem}\label{thm:alexander-polynomial}
Let $p>q>1$ be relatively prime, let $1<r\leq p+q$, and let
$s\in\mathbb Z$.  Define the four residue-block Laurent polynomials by
\begin{equation}\label{eq:block-polynomials}
\begin{aligned}
X_a(t)&=\sum_{m=0}^{j_*-1}t^{rs\overline L_m+pm},
&
X_c(t)&=\sum_{m=0}^{q-1}t^{rs\overline L_m+pm},\\
Z_a(t)&=t^{E_a}\sum_{h=0}^{p-i_*}
t^{-qh-rs\sum_{i=p-h}^{p-1}\widehat M_i},
&
Z_c(t)&=t^{E_c}\sum_{m=0}^{p-1}t^{-rs\overline M_m-qm}.
\end{aligned}
\end{equation}
Here and throughout, an empty sum is understood to be zero.
Then
\[
\Delta_{T(p,q;r,s)}(t)
\doteq
\frac{1-t}{1-t^r}
\bigl(Z_a(t)X_c(t)-X_a(t)Z_c(t)\bigr).
\]
\end{theorem}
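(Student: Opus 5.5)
The plan is to compute a two-generator, one-relator presentation of $\pi_1(S^3\setminus T(p,q;r,s))$ from the genus-two Heegaard picture, then apply Fox calculus. I would take the standard genus-two Heegaard splitting $S^3=H_a\cup H_c$ and isotope $K=T(p,q;r,s)$ onto the Heegaard surface $\Sigma$. In this picture the torus-knot part winds around the two handles with slopes governed by $p$ and $q$, and the $s$ full twists on $r$ strands are realized by a Dehn twist of $\Sigma$ along a curve separating the $r$ adjacent strands. Cutting along $K$ gives two handlebody sides, with free groups $\langle a_1,a_2\rangle$ and $\langle c_1,c_2\rangle$. By van Kampen, the knot group is generated by these elements. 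The relations say that the inclusion words of the two boundary-parallel curves on $\Sigma\setminus K$ agree. Eliminating variables gives a presentation $\langle x,y\mid W\rangle$ in which one generator is a meridian-type element and the relator has the shape $A_1C_2=A_2C_1$, where the $A$'s and $C$'s are words read off from the two sides.

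The key combinatorial step is writing these words explicitly. Walking along the curve on the $a$-side, the letters are grouped by residue classes: the strand indexed by $j$ crosses the twisting disk $L_j$ times, according to whether $\mu_j+kq\le r-1$. Symmetrically, on the $c$-side the strands indexed by $i$ cross it $M_i$ times, with $\tau_i=[-iq]_p$. The entry point of the twisting region on the boundary curve is determined by solving $pj+r\equiv0\pmod q$. This produces $j_*$ and $\ell_*$, hence $i_*$, and it explains why the words split into a complete block (the index $c$, summing over all residues) and a partial block (the index $a$, truncated at $j_*$ or $i_*$). The endpoint $r=p+q$ needs one extra letter removed, which accounts for $\widehat M_0=M_0-1$.

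Next I would abelianize via the map to $\langle t\rangle$. Each generator maps to a power of $t$ fixed by its linking number with $K$: the $a$-side core links $K$ like $q$ plus the twist contribution, and the $c$-side core like $p$. Then I compute the Fox derivative $\partial W/\partial x$. The Fox derivative of a product of periodic blocks telescopes into geometric-type sums. The exponents accumulate as $pm+rs\overline L_m$ on the $a$-side and as $qm+rs\overline M_m$ on the $c$-side, which gives exactly $X_a,X_c,Z_a,Z_c$. The normalizing exponents $E_a$ and $E_c=r^2s+pq$ are the total abelianized lengths. The identity $\sum_j L_j=r=\sum_i M_i$ gives $E_c$; I would check this by counting the residues below $r$. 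The Alexander polynomial is then $\Delta\doteq(\partial W/\partial x)\cdot(1-t)/(1-\phi(y))$, where $\phi(y)=t^r$ because the second generator is a loop around the twisting circle, which links $K$ $r$ times. This produces the prefactor $(1-t)/(1-t^r)$.

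I expect the main obstacle to be the bookkeeping in the second step. One must verify that the cyclic order of strand intersections along each boundary word is really indexed by $m\mapsto\mu_m$ and $h\mapsto\tau_{p-h}$, with block lengths $L_j$ and $M_i$, uniformly in the complementary range $p<r\le p+q$. There the twisting disk crosses both handles, so the braid model is unavailable. I would first check the picture for $T(7,5;9,1)$ as in Figure~\ref{fig:intro-genus-two}, then prove the residue-order claim via the torus-knot parametrization $k\mapsto(kq \bmod p,\,kp\bmod q)$. Finally, I would confirm consistency with the earlier formula for $r\le p$ as a sanity check.
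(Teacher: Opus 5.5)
\textbf{There is a genuine gap: the presentation step fails as described, and the Fox-calculus step depends on it.}

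The surface $\Sigma\setminus\nu(K)$ is a torus with two holes. Its fundamental group is free of rank three, so Seifert--Van Kampen needs three relations. The two boundary-parallel copies of $K$ generate only a proper subgroup: they are homologous up to sign, so their images in $H_1(\Sigma\setminus\nu(K))\cong\mathbb Z^3$ span a rank-one subgroup. Four handlebody generators with two relations would have deficiency two, which is impossible for a group with $H_1\cong\mathbb Z$. So no elimination leads from your data to $\langle x,y\mid W\rangle$.

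What one actually gets is the deficiency-one presentation $\langle x,y,z,w\mid r_1,r_2,r_3\rangle$:
\begin{itemize}
\item $r_3$ comes from the loop $c$ parallel to $K$ (the full words);
\item $r_1$ comes from a loop $a$ that follows $c$ until the sector $u_n=r-q$ and then returns (this, not an entry point on a boundary curve, is where the truncation at $j_*,i_*$ comes from);
\item $r_2=y^sw$ comes from a third loop $b$.
\end{itemize}
You can discard $w$ via $r_2$, but in general you cannot go further. For $T(7,5;9,1)$ one has $j_*=3$ and $i_*=2$, so $x$ occurs three times and $z$ six times in $r_1$, and no generator can simply be solved for.

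This is also why the answer has the form $Z_aX_c-X_aZ_c$: it is a $2\times2$ minor built from the two independent relators $r_1$ and $r_3$. Delete the $y$-column of the $3\times4$ Alexander matrix and expand along the row of $r_2$, whose only surviving entry is $\alpha(\partial r_2/\partial w)=t^{rs}$. The resulting minor is $m_y=t^{rs}(Z_aX_c-X_aZ_c)$, and the deleted-column formula with $\alpha(y)=t^r$ supplies $(1-t)/(1-t^r)$.

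You also need the lattice-point identities $\sum_{j\le j_*}L_j=1+\sum_{k\ge i_*}M_k$ and $pj_*+r=q(p-i_*+1)$. They show that $\alpha$ kills $r_1$, so $\alpha$ is the abelianization. They also identify the exponent of $\alpha(\varphi_1(a))$ with $E_a$. The identity $\sum L_j=\sum M_i=r$ handles only $r_3$ and $E_c$.

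\textbf{Second gap: the range $1<r\le p$ is not proved.} The theorem is stated for all $1<r\le p+q$, but your derivation covers only the complementary range and treats $r\le p$ as a sanity check. That case needs a proof.
\begin{itemize}
\item For $1<r<p$ with $r\neq q$, show algebraically that $(1-t^p)X_a$, $(1-t^p)X_c$, $(1-t^q)Z_a$, $(1-t^q)Z_c$ are, up to units, the four polynomials of the earlier formula. Then the unified formula reduces to the earlier one. The truncation identities above still hold in this range and govern the truncated endpoints.
\item The cases $r=q$ and $r=p$ must be checked directly; there the formula gives the torus knots $T(q,p+qs)$ and $T(p,q+ps)$.
\end{itemize}
Your endpoint remark ($\widehat M_0=M_0-1$ from one fewer $w$) points the right way. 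At $r=p+q$, however, the word $\varphi_2(a)$ must be rederived, because the final passage through the twisting disk becomes a return across the disk dual to $z$. The abelianization must then be rechecked.
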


\begin{remark}
For $1<r<p$, away from the separately evaluated case $r=q$, this is
equivalent to the formula of~\cite{Park-Adnan-2025-01}; the range $p<r<p+q$ is the new calculation in
this paper.  At $r=q$ and $r=p$ it specializes respectively to the
Alexander polynomials of the torus knots $T(q,p+qs)$ and $T(p,q+ps)$.
The formula is valid for positive, zero, and negative $s$.
For $r<p+q$, one has $i_*\geq1$, so the adjustment of $M_0$ does
not enter $E_a$ or $Z_a$ and the strict-range formula is unchanged.
At $r=p+q$, the adjusted sequence accounts for the terminal return word;
Proposition~\ref{prop:endpoint-formula} verifies this case of the same formula.
\end{remark}

For $q=1$, the strict complementary range is empty, while the endpoint
$r=p+1$ reduces to a torus-knot calculation.  The cases $q=1$ and $r=1$
are explained in Section~\ref{sec:elementary-cases}.

The formula has several applications in the complementary range.  The
\emph{breadth} of a nonzero Laurent polynomial is the difference between
its highest and lowest exponents, equivalently its degree after
normalizing the lowest exponent to zero.  We determine this breadth as
follows.

\begin{theorem}\label{thm:alexander-degree}
Let $K=T(p,q;r,s)$ satisfy $p<r\leq p+q$, $p>q>0$, and
$\gcd(p,q)=1$.  Then
\[
\operatorname{br}\Delta_K(t)=
\begin{cases}
(p-1)(q-1)+sr(r-1),&s\geq0,\\[2mm]
(-s)r(r-1)-(p-1)(q-1)-2(r-1),&s<0.
\end{cases}
\]
Moreover, if $s\geq0$, then $K$ is fibered and
\[
g(K)=\frac{(p-1)(q-1)+sr(r-1)}{2}.
\]
\end{theorem}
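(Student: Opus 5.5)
We will read the breadth off the formula of Theorem~\ref{thm:alexander-polynomial}, which is valid in the range $p<r\leq p+q$. Write
\[
F(t)=Z_a(t)X_c(t)-X_a(t)Z_c(t),
\qquad\text{so that}\qquad
\Delta_K\doteq\frac{1-t}{1-t^r}F(t).
\]
Since division by $(1-t^r)/(1-t)=1+t+\dots+t^{r-1}$ lowers the breadth by exactly $r-1$, we get $\operatorname{br}\Delta_K=\operatorname{br}F-(r-1)$. It therefore suffices to find the extreme exponents of $F$ and show that their coefficients are $\pm1$. This also yields monicity, which we need for fiberedness.

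\textbf{Locating the extreme terms.} The exponents in $X_c$ are $rs\overline L_m+pm$ for $0\le m\le q-1$, and $X_a$ is the truncation of $X_c$ to $m<j_*$. In $Z_c$ the exponents are $E_c-rs\overline M_m-qm$, and $Z_a$ is a partial sum of the same shape, shifted by $E_a$. For $s\geq0$ every one of these sequences is strictly monotone in its index: the $X$ exponents increase, and the $Z$ exponents decrease in $m$ (respectively in $h$).

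Each product $Z_aX_c$ and $X_aZ_c$ then has a unique top term and a unique bottom term. We compare four candidate extremes. For the maximum, the candidates are $E_a+\deg X_c$ (from $h=0$, $m=q-1$) and $E_c+\deg X_a$ (from $m=0$ in $Z_c$, $m=j_*-1$ in $X_a$). For the minimum, the candidates are the lowest exponent of $Z_a$ (with $m=0$ in $X_c$) and the lowest exponent of $Z_c$ (with $m=0$ in $X_a$).

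We will use the identities $\sum_{j<q}L_j=r$ and $\sum_{i<p}M_i=r$; both hold because each residue class block counts the integers in $[0,r-1]$. Together with the definitions of $i_*$ and $j_*$, these identities give the evaluations
\[
\overline L_{q-1}=\overline M_{p-1}=r,
\]
and they relate $\overline L_{j_*-1}$ to $\sum_{i\ge i_*}\widehat M_i$. The aim is to show that one candidate in each comparison is strictly dominant; we expect the top to come from $Z_aX_c$ and the bottom from $X_aZ_c$, or the reverse. The resulting difference of exponents should then simplify to $(p-1)(q-1)+sr(r-1)+(r-1)$.

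The case $j_*=0$ makes $X_a$ vanish and has to be handled separately. In that case $F=Z_aX_c$ and the breadth is additive over the two factors.

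\textbf{The case $s<0$.} Here the sequences in $X$ and $Z$ become decreasing and increasing respectively, so different index endpoints give the extremes. For example, the top term of $X_c$ is now at $m=0$ with exponent $0$, and the bottom term is at $m=q-1$ with exponent $rs\cdot r+p(q-1)$, provided $L_m\ge1$ contributions dominate. That proviso matters because, for $s<0$, the exponent $rs\overline L_m+pm$ is monotone only if $|rs|L_{m}>p$ for each step. In the complementary range $r>p>q$ we have $L_j\ge1$, so $|rs|L_j\ge r>p$, and similarly $|rs|M_i\geq r>q$. This justifies strict monotonicity.

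We redo the four-way comparison with these endpoints. The same identities should give $\operatorname{br}F=(-s)r(r-1)-(p-1)(q-1)-(r-1)$, and hence the stated formula after subtracting $r-1$.

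\textbf{Main obstacle and genus.} The main obstacle is the cancellation check. We must show that the two candidate extreme exponents coming from $Z_aX_c$ and $X_aZ_c$ are never equal. If they were equal they could cancel, so we must either show strict inequality or show that a tie is impossible by residue considerations modulo $p$ and modulo $q$. I would first try to prove strict inequality directly from $\ell_*=(pj_*+r)/q\le p+1$ and from $p<r$. The endpoint $r=p+q$, where $i_*$ may be $0$, would then be checked separately using the adjusted $\widehat M_0$.

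Once the extreme coefficients are shown to be $\pm1$, monicity of $\Delta_K$ follows. For the genus, when $s\ge0$ the knot $T(p,q;r,s)$ is a closure of a positive braid: the complementary range admits Lee's positive braid description~\cite{Lee-2015}, or alternatively the twisted torus knot with positive twists is a positive braid closure. Hence $K$ is fibered with $2g(K)=\operatorname{br}\Delta_K$, which gives the stated genus.
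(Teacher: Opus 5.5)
Your reduction $\operatorname{br}\Delta_K=\operatorname{br}F-(r-1)$, the separate treatment of $j_*=0$, the monotonicity checks, and the fiberedness argument via Lee's positive braid are all fine. The central step fails, however: the four-way comparison cannot give strict dominance at both ends, because at one end the two candidates are \emph{equal} and cancel.

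Take $s>0$ and $j_*>0$. The lowest exponent of $Z_c$ (at $m=p-1$) is $E_c-r^2s-q(p-1)=q$, and this is also the lowest exponent of $Z_a$ (at $h=p-i_*$). The lowest exponent of $X_c$ is $rsL_0$, which is also that of $X_a$. So $Z_aX_c$ and $X_aZ_c$ share the same unique lowest term $t^{q+rsL_0}$ with coefficient $1$, and it disappears from $F$. For $s<0$ the same thing happens at the top. The tie you hoped to exclude by a strict inequality or a residue argument really occurs, and it is not an isolated cancellation. For $(p,q,r,s)=(5,3,7,1)$ one has $X_a=t^{21}$, $X_c=t^{21}+t^{40}+t^{59}$, $Z_a=t^{40}+t^{30}+t^{13}+t^{3}$ and $Z_c=t^{50}+Z_a$. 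Your candidates give the interval $[24,99]$, whereas $F$ actually has exponents in $[43,99]$, with breadth $56=50+6$. Two smaller slips: for $s>0$ the top exponent of $Z_c$ is $E_c-rsM_0$, not $E_c$; for $s<0$ the top exponent of $X_c$ is $rsL_0$, not $0$.

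The missing idea is the identity behind this cancellation. Write $X_c=X_a+X_+$, where $X_+$ collects the terms with $m\geq j_*$. Then show that $Z_a$ is exactly the tail $m\geq i_*-1$ of $Z_c$, so that $Z_c=Z_-+Z_a$. This second identity is not formal. The exponent steps of $Z_a$ are $-q-rsM_{p-1-h}$, while those of the tail of $Z_c$ are $-q-rsM_{i_*+h}$, so you need the palindrome property of $M_{i_*},\dots,M_{p-1}$, which comes from $\tau_{i_*}=r-q$. You also need to check that the $m=i_*-1$ term of $Z_c$ has exponent $E_a$; this uses $\sum_{j\le j_*}L_j=1+\sum_{k\ge i_*}M_k$ and $pj_*+r=q(p-i_*+1)$.

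With these identities the whole product $Z_aX_a$ cancels, and $F=Z_aX_+-X_aZ_-$. Only now does an extreme-term comparison work, and both extremes come from $Z_aX_+$, not one from each product as you expected. The exponent interval of $X_aZ_-$ lies strictly inside that of $Z_aX_+$. For $s>0$ the gaps are $r(1+s(L_0+M_{i_*-1}-1))$ and $r(1+s(L_{j_*}+M_0-1))$. For $s=-u<0$ they are $r(u(L_0+M_{i_*-1}-1)-1)$ and $r(u(L_{j_*}+M_0-1)-1)$, which are positive because $L_0\geq2$ and $M_0=2$. Hence $\operatorname{br}F=\operatorname{br}Z_a+\operatorname{br}X_+$, and this evaluates to the stated formulas.

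The endpoint $r=p+q$ has the same defect. There $X_c=X_a+t^{sr^2+p(q-1)}$ and $Z_a=Z_c+t^{E_a}$, so $F=t^{E_a}X_c+t^{sr^2+p(q-1)}Z_c$ only after most terms cancel. Merely substituting $\widehat M_0$ into your comparison would not detect this.
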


The classical Alexander genus bound gives
\[
g(K)\geq\frac12\operatorname{br}\Delta_K(t),
\]
with equality for fibered knots.  For $s>0$ and $p<r<p+q$, $K$ can be
written as the closure of a positive $r$-strand braid by the isotopy in
\cite[Figures~3(b) and~4]{Lee-2015}.  At $r=p+q$, the standard
$(p+q)$-strand Lorenz braid for $T(p,q)$, followed by $s$ positive full
twists on all strands, gives the same conclusion.  Thus $K$ is fibered by
Stallings' theorem~\cite{Stallings-1978}.  For $s=0$, $K$ is the torus
knot $T(p,q)$.
Thus the breadth formula determines the genus for $s\geq0$.
For negative twists, we first consider monicity of the Alexander
polynomial, a necessary condition for fiberedness.

\begin{theorem}\label{thm:monicity}
Let $p<r\leq p+q$, $p>q>0$, $\gcd(p,q)=1$, and $s\in\mathbb Z$.
Then the Alexander polynomial of $T(p,q;r,s)$ is monic.
\end{theorem}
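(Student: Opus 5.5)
The plan is to read off the extreme coefficients directly from the formula of Theorem~\ref{thm:alexander-polynomial}. Write
\[
P(t)=Z_a(t)X_c(t)-X_a(t)Z_c(t),
\qquad\text{so that}\qquad
\Delta_K(t)\doteq\frac{1-t}{1-t^r}\,P(t).
\]
Since $(1-t)/(1-t^r)=1/(1+t+\cdots+t^{r-1})$, and the denominator is monic with constant term $1$, the leading and trailing coefficients of $\Delta_K$ equal those of $P$ up to sign. Hence it suffices to show that the highest-degree and lowest-degree terms of $P$ each have coefficient $\pm1$. The cases $q=1$ are handled in Section~\ref{sec:elementary-cases}; at $r=p+1$ the knot is a torus knot, whose Alexander polynomial is monic. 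So we may assume $q>1$ and $p<r\leq p+q$, and in particular the formula applies.

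The first step is to record the exponent sets. Each of $X_a,X_c,Z_a,Z_c$ is a sum of monomials with coefficient $+1$. The exponents $rs\overline L_m+pm$ are strictly monotone in $m$ when $s\geq0$. For $s<0$ I would still expect strict monotonicity, using $L_j\geq1$ and the bound $r>p$, which gives $L_j\in\{1,2\}$ since $r\leq p+q<2q+p$. The same analysis applies to the $Z$ exponents $-qh-rs\sum\widehat M_i$, where $M_i\in\{1,2\}$ because $p<r\leq p+q<2p$.

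Next I split into $s\geq0$ and $s<0$. For $s\geq0$, the extreme exponents of $Z_aX_c$ and $X_aZ_c$ are sums of the extreme exponents of the factors. I would compare the top exponent of $Z_aX_c$, namely $E_a+rs\overline L_{q-1}+p(q-1)$, with the top exponent of $X_aZ_c$, namely $rs\overline L_{j_*-1}+p(j_*-1)+E_c$. I expect these to differ, with the difference computable from $\sum_j L_j=r$, $\sum_i M_i=r$ (each residue class contributes exactly once per block) and the defining relation $q\ell_*=pj_*+r$. If the two top exponents differ, the top monomial is unique: it is a product of the unique top monomials of two factors, with coefficient $\pm1$. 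The bottom exponent is handled the same way. These are essentially the computations that yield the breadth in Theorem~\ref{thm:alexander-degree}, so I would organize them as a single lemma producing both statements.

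The main obstacle is that the two products might have equal extreme exponents, making the leading terms cancel. For $s<0$ a further complication arises: the extreme monomial of $Z_aX_c$ might be attained by several pairs $(m,h)$, because the negative drift $rs$ competes with the positive increments $p$ and $q$. To handle this I would show that each factor's exponent sequence is strictly monotone when $s<0$, decreasing, since the step $rsL_{m}+p\leq rs+p<0$ as $r>p$; likewise the $Z$ steps are monotone. Then each product again has a unique extreme monomial. If the extremes of $Z_aX_c$ and $X_aZ_c$ coincide and cancel, I would examine the next-to-extreme terms using the block structure, where $L_j,M_i\in\{1,2\}$ are determined by the residues $\mu_j,\tau_i$ relative to $r-1-q$ and $r-1-p$. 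I would aim to show that exactly one surviving monomial remains, with coefficient $\pm1$. The breadth formula for $s<0$, with its correction $-2(r-1)$, suggests that precisely this kind of cancellation occurs at both ends, so this step is where I expect to spend the most effort.
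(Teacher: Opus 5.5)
Your direct comparison works at one end, but the plan to control both ends has a real gap at the other. For $s>0$ the lowest exponents of $Z_aX_c$ and $X_aZ_c$ are both $q+rsL_0$. For $s=-u<0$ the highest exponents are both $q-ruL_0$. Both statements hold whenever $j_*>0$.

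This is not an accidental tie of two monomials that next-to-extreme terms could resolve. In the strict range $X_c=X_a+X_+$, and by Lemma~\ref{lem:M-palindrome} also $Z_c=Z_-+Z_a$. Hence $Z_aX_c-X_aZ_c=Z_aX_+-X_aZ_-$, and the entire product $Z_aX_a$ cancels. The surviving extreme term lies well inside, at an extreme of $Z_aX_+$. The paper's proof is exactly this reduction, followed by a comparison of $Z_aX_+$ with $X_aZ_-$ at that end: the lowest term for $s>0$ and the highest term for $s<0$. Your proposal has no mechanism for this, so the step ``the bottom exponent is handled the same way'' fails as written.

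The missing observation that rescues your route is that you need only one end. Since $\Delta_K$ is symmetric, one extreme coefficient equal to $\pm1$ already gives monicity, and the paper uses this too. At the non-cancelling end your comparison goes through:
\begin{itemize}
\item for $s\geq0$, the top exponents differ by $r\bigl(1+s(L_{j_*}+M_0-1)\bigr)$;
\item for $s=-u<0$, the bottom exponents differ by $r\bigl(u(L_{j_*}+M_0-1)-1\bigr)$.
\end{itemize}
Both are positive because $M_0=2$ in this range.

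To obtain these differences you need the lattice identity $\sum_{j=0}^{j_*}L_j=1+\sum_{k=i_*}^{p-1}M_k$ of Lemma~\ref{lem:truncation-identities}. The reason is that $E_a$ carries a suffix sum of the $M_k$, while the top exponent of $X_a$ carries $\overline L_{j_*-1}$. The identities you list ($\sum L_j=\sum M_i=r$ and $q\ell_*=pj_*+r$) do not connect these partial sums.

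Two smaller corrections:
\begin{itemize}
\item The top exponent of $X_aZ_c$ is $rs\overline L_{j_*-1}+p(j_*-1)+E_c-rsM_0$; you omitted the $-rsM_0$.
\item The claim $L_j\in\{1,2\}$ is false in general: for $(p,q,r)=(7,2,8)$ one gets $L_0=4$. Only $L_j\geq1$ is actually used, so this does no harm.
\end{itemize}

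Repaired this way, your argument is a legitimately different and lighter proof of monicity. It avoids the palindrome lemma, and it covers $r=p+q$ by the same computation, since there $\sum_jL_j=r=1+\sum_i\widehat M_i$. The paper's reduction is heavier if monicity is the only goal, but it controls both ends, and the paper needs that anyway for the breadth formula.
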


This conclusion contrasts with the range $r<p$, where in our recent
work~\cite{Adnan-Park-2026} we used nonmonic Alexander polynomials to
construct several infinite families of non-fibered twisted torus knots.
For $s<0$, monicity alone does not imply fiberedness, so we examined
knot Floer homology for further evidence.  Using Spherogram~2.3 and the
Python wrapper~1.2.2 for Szab\'o's HFK Calculator
\cite{Spherogram,HFK-Calculator}, we computed knot Floer homology over
$\mathbb F_2$ for all $26$ parameter choices satisfying
\[
3\leq p\leq6,\qquad 2\leq q<p,\qquad \gcd(p,q)=1,
\qquad p<r<p+q,\qquad s\in\{-2,-1\}.
\]
Every knot had rank one in the top Alexander grading and hence was
fibered by Ni's theorem~\cite{Ni-2007}.  In each case, the graded Euler
characteristic agreed with Theorem~\ref{thm:alexander-polynomial}, and
the Seifert genus detected by knot Floer homology equaled one half of the
breadth in Theorem~\ref{thm:alexander-degree}.  Thus the genus bound is
sharp in these negative-twist examples.  These computations motivate the
following conjecture.

\begin{conjecture}\label{conj:fiberedness}
Every twisted torus knot $T(p,q;r,s)$ with
$p+q>r>p>q>0$, $\gcd(p,q)=1$, and $s<0$ is fibered.
\end{conjecture}

By Theorem~\ref{thm:monicity}, every knot in the range of
Conjecture~\ref{conj:fiberedness} has a monic Alexander polynomial.  Thus
Conjecture~\ref{conj:fiberedness} is the restriction to the complementary
negative-twist range of the ``monic implies fibered'' direction of our
earlier conjecture~\cite[Conjecture~1.6]{Adnan-Park-2026}.  The knot Floer
homology computations above provide evidence for this restricted case,
while we continue to expect the earlier conjecture to hold in its original
generality.  Moreover, in the complementary range $p<r<p+q$ with $s<0$,
Conjecture~\ref{conj:fiberedness}, together with
Theorem~\ref{thm:alexander-degree}, predicts that the Alexander genus bound
is sharp.  We pursue a braid-theoretic approach to this conjecture in a
separate work in progress.

An $L$-space knot is a knot in $S^3$ admitting a nontrivial Dehn
surgery to an $L$-space.  Introduced by Ozsv\'ath and
Szab\'o~\cite{Ozsvath-Szabo-2005}, an $L$-space is a rational homology
sphere $Y$ whose Heegaard Floer homology has the smallest possible rank,
namely $\dim_{\mathbb F_2}\widehat{HF}(Y;\mathbb F_2)=|H_1(Y;\mathbb Z)|$.
The knot Floer homology constraints of
\cite[Theorem~1.2]{Ozsvath-Szabo-2005}, together with Ni's
theorem~\cite{Ni-2007}, imply that every $L$-space knot is fibered,
but the converse does not hold.  More specifically, Ozsv\'ath and
Szab\'o~\cite[Corollary~1.3]{Ozsvath-Szabo-2005} proved that the nonzero
Alexander-polynomial coefficients of an $L$-space knot are $\pm1$ and
alternate in sign.  Thus a coefficient of absolute value greater than
one obstructs the $L$-space property even for a fibered knot.
In our previous work~\cite{Park-Adnan-2025-01}, we used this obstruction
to construct infinite families of non-$L$-space twisted torus knots
with $r<p$.  Applying the same criterion in the complementary range
$p<r<p+q$ gives the following two-parameter families.

\begin{theorem}\label{thm:non-l-space-family}
For positive integers $n$ and $u$, the following twisted torus knots are
not $L$-space knots:
\begin{enumerate}
\item $T(2n+1,2n;3n,-u)$ for $n\geq3$;
\item $T(2n+1,2n;3n,u)$ for $n\geq4$ and $u\geq2$, and
      $T(2n+1,2n;3n,1)$ for $n\geq5$.
\end{enumerate}
All the knots listed above are pairwise distinct.
\end{theorem}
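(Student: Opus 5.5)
Substituting $p=2n+1$, $q=2n$, $r=3n$ into Theorem~\ref{thm:alexander-polynomial} gives $\Delta_K(t)$ explicitly. We then locate a coefficient of absolute value at least two, or two consecutive nonzero coefficients of the same sign. By \cite[Corollary~1.3]{Ozsvath-Szabo-2005} either one rules out the $L$-space property.

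\textbf{Step 1: the residue data.} Since $p\equiv 1 \pmod q$, we have $p^{-1}=1$ and $\mu_j=[-j]_{2n}$, so $\mu_0=0$ and $\mu_j=2n-j$ for $j\geq1$. With $r-1=3n-1$ this gives $L_0=2$, $L_j=2$ for $j\geq n+1$, and $L_j=1$ for $1\leq j\leq n$. Likewise $q\equiv -1 \pmod p$, so $\tau_i=i$, and $M_i=\lfloor (3n-1-i)/(2n+1)\rfloor+1$ equals $2$ for $i\leq n-2$ and $1$ for $n-1\leq i\leq 2n$. Next, $j_*=[-3n]_{2n}=n$, hence $\ell_*=(n(2n+1)+3n)/(2n)=n+2$ and $i_*=2n+2-(n+2)=n$. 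All four block polynomials are then explicit geometric-type sums whose exponents are piecewise linear in $m$ with slopes $rs\pm p$ or $2rs+p$ (respectively $rs+q$ or $2rs+q$). Then $E_a=2n(n+2)+3ns\cdot n$ and $E_c=9n^2s+2n(2n+1)$.

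\textbf{Step 2: extract extremal coefficients.} Write $N(t)=Z_aX_c-X_aZ_c$. Rather than dividing by $(1-t^r)/(1-t)$ globally, I would expand $\Delta(t)(1+t+\cdots+t^{r-1})=N(t)$ near one end (the lowest degree) and solve for the first few coefficients of $\Delta$ recursively. For $|s|$ large compared with $n$ the monomials in $N$ separate into clusters at spacing $\approx rs$, and each cluster involves only shifts by multiples of $p$ and $q$. Near the bottom of $\Delta$ we then get a short, $u$-independent pattern. I would aim to show that some early coefficient of $\Delta$ equals $\pm2$, or that the sign pattern fails, for example $1,-1,0,\dots,-1$. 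The natural candidate comes from where the slope changes from $L_j=2$ to $L_j=1$ near $j=n$ collide with $p$-shifts, since $p-q=1$ produces nearly coincident exponents. For small $u$ (namely $u=1$, and for negative $s$ all $u$) the clusters overlap. These cases are why $n\geq5$ or $n\geq3$ is required, and I would handle them with a separate direct computation of the first $O(n)$ coefficients. Negative $s$ is treated similarly, using the lowest end after normalizing. Checking small cases by computer guides which coefficient to target.

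\textbf{Step 3: distinctness.} Theorem~\ref{thm:alexander-degree} gives the breadth. For $s=u>0$ it is $2n(2n-1)/2\cdot\ldots$, precisely $(2n)(2n-1)+3nu(3n-1)$. For $s=-u$ it is $3nu(3n-1)-2n(2n-1)-2(3n-1)$. Within each sign family, the breadth is strictly increasing in $u$ for fixed $n$, but it can coincide across different $n$. So I would compare breadth together with another invariant, such as the second coefficient of the normalized $\Delta$ (or $\Delta(-1)$), to separate $(n,u)$. Across the two sign families I would compare these invariants in the same way. Equal breadth forces a Diophantine relation, which I would show is incompatible with equal $\Delta$.

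\textbf{The main obstacle} is Step 2: isolating a violating coefficient uniformly in $n$ and $u$, especially in the overlapping small-$|s|$ regimes. This is where the thresholds on $n$ enter.
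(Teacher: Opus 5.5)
Your Step~1 matches the paper's block data ($L=(2,1^{n},2^{n-1})$, $M=(2^{n-1},1^{n+2})$, $j_*=i_*=n$), apart from one slip: $E_a=2n(n+2)+3ns(n+1)$, because $M_i=1$ for the $n+1$ indices $n\le i\le 2n$.

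\textbf{Step~2 (the obstruction).} This step is the whole content of the non-$L$-space claim, and it is not carried out. You never identify an offending coefficient, and you give no mechanism that works uniformly in $n$ and $u$. Two ingredients are missing.
\begin{enumerate}
\item The splittings $X_c=X_a+X_+$ and $Z_c=Z_-+Z_a$ (the latter via Lemma~\ref{lem:M-palindrome}) reduce the numerator to $Z_aX_+-X_aZ_-$. For this family each of these four factors is a single geometric progression. Let $\epsilon$ be the sign of $s$, $\mathcal S_m(v)=\sum_{h=0}^m t^{hv}$, $d=(3u+2\epsilon)n$, $e=(6u+2\epsilon)n+\epsilon$ and $\eta=(6u+3\epsilon)n$. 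After normalizing, the numerator becomes $\mathcal S_{n+1}(d)\mathcal S_{n-1}(e)-t^{\eta}\mathcal S_{n-1}(d+\epsilon)\mathcal S_{n-2}(e-\epsilon)$. The $\pm1$ shifts are exactly the near-coincidences from $p-q=1$ that you anticipated.
\item The identity $a_j=\sum_{k\ge0}(b_{j-3nk}-b_{j-3nk-1})$ makes the coefficients of $\Delta$ running sums along residue classes modulo $3n$. Tabulating the increments in each class locates the first coefficient of absolute value $>1$, at $J=4(3u-2)n$, $(15u+9)n+3$ and $29n+4$ in the three cases.
\end{enumerate}

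Your cluster heuristic misreads this structure in two ways. First, the spacing ($\approx 3n|s|$) and the offsets (multiples of $p,q\approx 2n$) both scale with $n$. So whether clusters separate depends on $|s|$ alone, not on $|s|$ relative to $n$. Second, division by $1+t+\cdots+t^{3n-1}$ propagates every monomial through its whole residue class. The $\pm2$ is therefore not a local pattern inside one cluster but an accumulation over roughly $4u$ periods. For $s=-u$, the class $\rho=n$ receives increments $+1,-1,+1,+1$ at $\ell=u-1,\,2u-1,\,3u-2,\,4u-3$. Accordingly, the thresholds on $n$ come from the lengths of the finite sums and from the distinctness of the relevant residues mod $3n$, not from overlapping clusters.

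\textbf{Step~3 (distinctness).} This also fails as proposed. The second coefficient of the normalized polynomial is $-1$ for every knot in both families: the numerator has constant term $1$ and all its steps exceed $1$. So that coefficient carries no information, and $\Delta(-1)$ is never computed. The paper separates the knots using exactly the data from Step~2:
\begin{enumerate}
\item The sign of the first coefficient of absolute value $>1$ ($+2$ versus $-2$) separates the two twist signs.
\item Its position $\nu$, together with the breadth, determines $(n,u)$. For example, for $s=-u$ one has $x=\nu/4=n(3u-2)$ and breadth $(3n-1)x+2n^2-6n+2$, which is strictly increasing in $n$ for fixed $x$.
\item A separate polynomial inequality excludes coincidences between the $u=1$ and $u\ge2$ positive subfamilies.
\end{enumerate}
So distinctness, too, rests on the computation missing from your Step~2.
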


After normalizing each Alexander polynomial to have lowest exponent zero
and positive constant term, the proof explicitly locates the first
coefficient of absolute value greater than one.  In the three respective
cases above, this coefficient occurs at exponent $4(3u-2)n$,
$(15u+9)n+3$, and $29n+4$, and equals $2$, $-2$, and $-2$,
respectively.  These
data, together with the Alexander breadth, also distinguish all the knots
in the theorem.

Finally, we return to the question from our previous
work~\cite{Park-Adnan-2025-01} of whether the Alexander polynomial
determines a positively twisted torus knot up to mirror image.  Extending
the previous computations to the complementary range, we examined $12{,}435$
parameter choices satisfying
\[
3\leq p\leq20,\quad 0<q<p,\quad \gcd(p,q)=1,\quad
0<r<p+q,\quad 1\leq s\leq5.
\]
In the $258$ equal-Alexander groups with at least two numerically
hyperbolic members, the volumes agree within $10^{-8}$ and the decorated
link isometry signatures coincide.  Conversely, no equal-volume pair
with different Alexander polynomials was found among the $7{,}504$
numerically hyperbolic parameter choices.  These finite numerical checks
with SnapPy~3.2~\cite{SnapPy} motivate the following extension of our
earlier question.  The mirror of a positively twisted torus knot need not
itself be positively twisted.  Nevertheless, since neither the Alexander
polynomial nor the numerical invariants used here distinguish mirror
images, we formulate the conjecture up to mirroring.

\begin{conjecture}\label{conj:alexander-completeness}
Let $K=T(p,q;r,s)$ and $K'=T(p',q';r',s')$ be twisted torus knots, where
$p>q>0$, $\gcd(p,q)=1$, $0<r<p+q$, and $s>0$, and where the primed
parameters satisfy the analogous conditions.  If
$\Delta_K(t)\doteq\Delta_{K'}(t)$, then $K$ is isotopic in $S^3$ to either
$K'$ or its mirror image $\overline{K'}$.
\end{conjecture}

Section~2 derives the knot group in the complementary range, and Section~3
proves the unified formula, compares it with the previous formula, and treats
the endpoint $r=p+q$.  Section~4 proves monicity and determines the Alexander
breadth and Seifert genus in the complementary range.  Section~5 constructs
the two-parameter families of non-$L$-space knots.
Code and data for the computations supporting the conjectures above are
available in~\cite{Park-Adnan-Code}.

\subsection*{Use of AI tools}
AI tools assisted with checking calculations, developing and testing code,
and improving the English and presentation.  The authors take full
responsibility for the mathematical content, computational results, and
final manuscript.

\subsection*{Acknowledgments}
The authors were supported by National Research Foundation of Korea grants
funded by the Korean government (RS-2025-24523511 and RS-2025-25415913).

\section{The knot group}

In this section, we derive a presentation of the knot group of twisted
torus knots in the complementary range $p<r<p+q$.  The discussion follows
the same general strategy as our previous work~\cite{Park-Adnan-2025-01}:
we place the knot on a standard genus-two Heegaard surface, encode the
inclusion words of three distinguished loops, and apply the
Seifert--Van Kampen theorem.  In the complementary range, however, the
inclusion words require a different combinatorial description, which we
formulate in terms of residue-block decompositions.  The resulting
presentation is arranged for the Fox-calculus computation in Section~3.

\begin{figure}[t]
    \centering
    \includegraphics[width=0.7\textwidth]{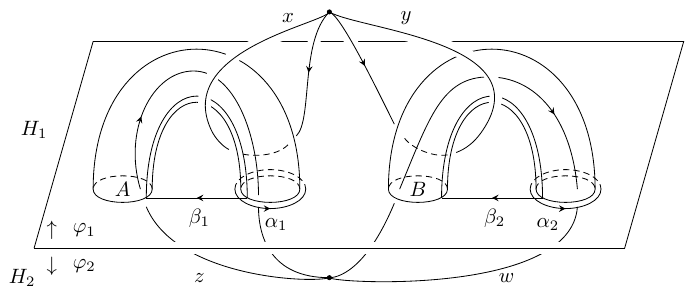}
    \caption{The standard genus-two Heegaard decomposition
    $S^3=H_1\cup H_2$, with generators $x,y$ of $\pi_1(H_1)$ and $z,w$ of
    $\pi_1(H_2)$.}
    \label{fig:handlebody-generators}
\end{figure}

\begin{figure}[t]
    \centering   
    \includegraphics[width=0.95\textwidth]{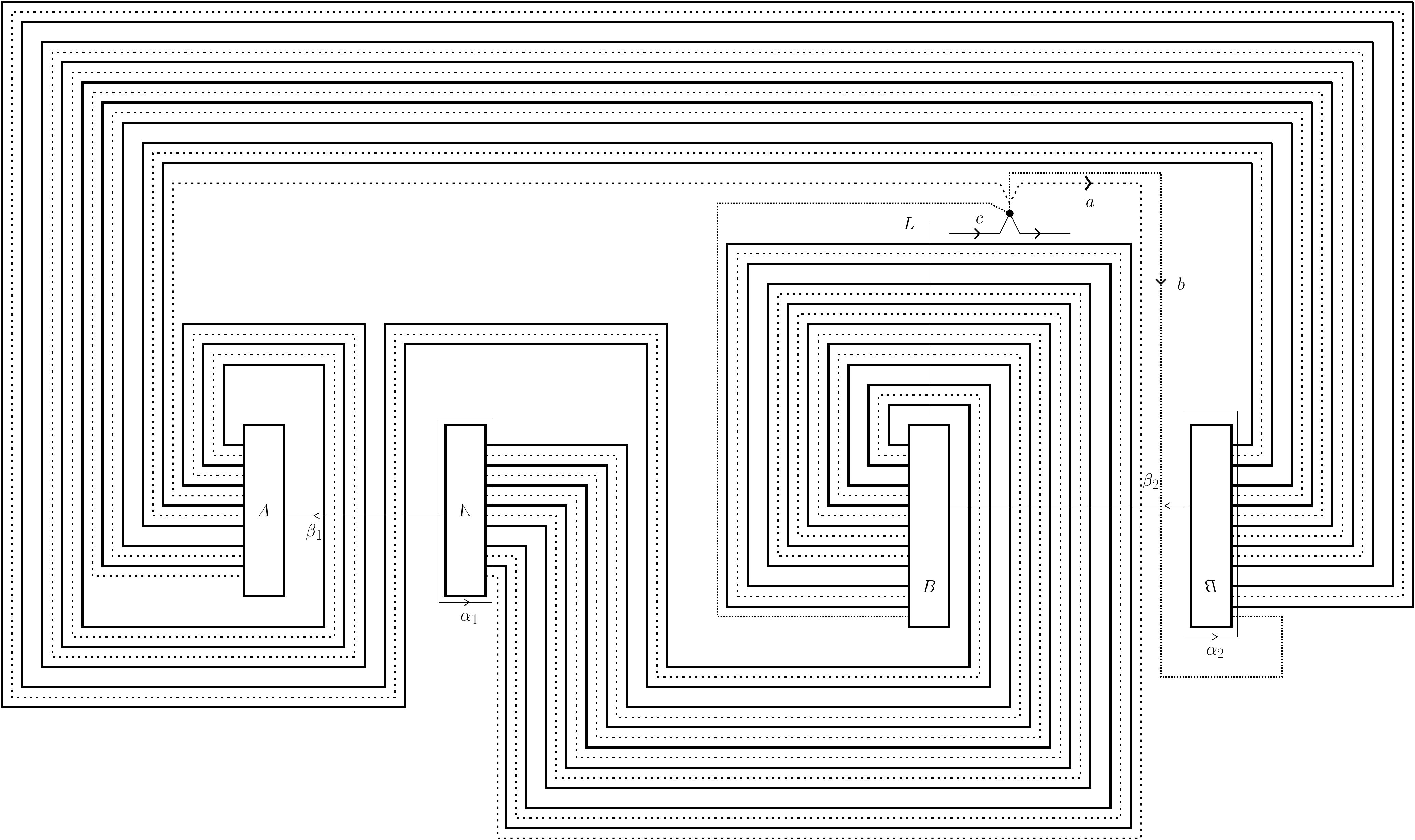}
    \caption{The twisted torus knot $T(7,5;9,1)$ on the standard genus-two
    Heegaard surface, together with the loops $a$, $b$, and $c$ used in the
    Seifert--Van Kampen presentation.}
    \label{fig:heegaard-loops}
\end{figure}

\begin{figure}[t]
    \centering
    \includegraphics[width=0.18\textwidth]{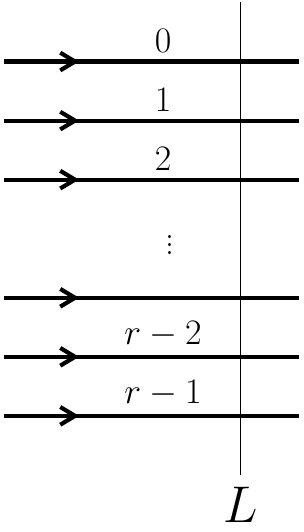}
    \caption{The $r$ sectors of the twisting disk $\mathcal D$, indexed by
    $0,1,\ldots,r-1$.}
    \label{fig:sectors}
\end{figure}

\begin{theorem}\label{thm:knot-group}
Let $T(p,q;r,s)$ be a twisted torus knot with $\gcd(p,q)=1$,
$p+q>r>p>q>0$, and $s\in\mathbb Z$.  With the residue indices
$j_*,i_*$ and block lengths $L_j,M_i$ defined above, its knot group has
the following presentation:
\[
\pi_1(S^3\setminus\nu(K))
\cong\langle x,y,z,w\mid r_1,r_2,r_3\rangle,
\qquad K=T(p,q;r,s),
\]
where
\[
\left\{
\begin{aligned}
r_1&=\left(\prod_{j=0}^{j_*-1}y^{L_js}x\right)y^{(L_{j_*}-1)s+1}
\left(\left(\prod_{k=i_*}^{p-1}z^{-1}w^{-M_k}\right)z^{-1}\right)^{-1},\\[3pt]
r_2&=y^sw,\\[3pt]
r_3&=y^{L_0s}x\,y^{L_1s}x\,\cdots\,y^{L_{q-1}s}x
\cdot w^{M_0}zw^{M_1}z\cdots w^{M_{p-2}}zw^{M_{p-1}}z.
\end{aligned}
\right.
\]
\end{theorem}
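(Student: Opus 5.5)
We follow the genus-two Heegaard surface strategy of~\cite{Park-Adnan-2025-01}. Write $S^3=H_1\cup H_2$ as in Figure~\ref{fig:handlebody-generators}, with $\pi_1(H_1)=\langle x,y\rangle$ and $\pi_1(H_2)=\langle z,w\rangle$. We isotope $K=T(p,q;r,s)$ onto the genus-two surface $\Sigma=\partial H_1$ as in Figure~\ref{fig:heegaard-loops}. Here the torus knot $T(p,q)$ runs on the first torus summand, and the $r$ strands passing through the twisting disk $\mathcal D$ are rerouted over the second handle, where they acquire the $s$ full twists.

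Cutting $\Sigma$ along $K$ leaves a neighbourhood whose complement $\Sigma\setminus\nu(K)$ is a surface with fundamental group carried by the three loops $a,b,c$. Seifert--Van Kampen then presents $\pi_1(S^3\setminus\nu(K))$ as $\pi_1(H_1)*\pi_1(H_2)$ modulo the identifications $i_{1*}(\gamma)=i_{2*}(\gamma)$ for $\gamma\in\{a,b,c\}$. The relator $r_2=y^sw$ comes from the loop $b$, which encircles the twisting handle once. It meets the $y$-handle $s$ times through the twists and the $w$-handle once, so it is immediate.

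The substance lies in $r_3$ (the loop $c$) and $r_1$ (the loop $a$). We encode where $K$ crosses the meridian disks of the four handles using the sectors $0,\dots,r-1$ of $\mathcal D$ (Figure~\ref{fig:sectors}). The strands of $T(p,q)$ meet the disk in positions that are consecutive multiples of $p$ (going around the $x$-direction) or of $q$ (going around the $z$-direction), reduced modulo $q$ or $p$ respectively.

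Since $p<r<p+q$, the $r$ sectors no longer form an initial segment of a single braid orbit. Instead they split into residue classes: sector $k$ lies in class $j$ for the $x$-side precisely when $k\equiv\mu_j=[-jp]_q \pmod q$. The number of sectors in class $j$ is $\#\{k<r: k\equiv \mu_j \pmod q\}=L_j$. Symmetrically, on the $z$-side, $\tau_i=[-iq]_p$ gives $M_i$ sectors.

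Tracing $c$ once around the first torus summand, it crosses $x$ a total of $q$ times. Between consecutive crossings it passes each sector of the current class, and each such passage picks up $y^s$; this produces the word $\prod_j y^{L_js}x$. On the other handle it picks up $w^{M_i}z$ for each $i$. Together these give $r_3$, and we check the cyclic order by following the residues $0,-p,-2p,\dots$ modulo $q$.

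The loop $a$ starts at the boundary of $\mathcal D$ and runs only until it first returns to the distinguished sector $r-1$ (the last strand entering the disk). The index $j_*=[-rp^{-1}]_q$ is exactly the $x$-class containing the endpoint, since $pj_*+r\equiv 0 \pmod q$. The integer $\ell_*=(pj_*+r)/q$ counts the meridian crossings traversed, which gives $i_*=p+1-\ell_*$ as the starting $z$-class on the other side.

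Reading $a$ in $H_1$ therefore gives the full blocks $j<j_*$, followed by a partial block $y^{(L_{j_*}-1)s+1}$. In the exponent of this partial block, the $-1$ omits the terminal sector, and the $+1$ records the extra crossing of $y$ at the end of the loop. Reading $a$ in $H_2$ gives the reversed product $\left(\prod_{k=i_*}^{p-1}z^{-1}w^{-M_k}\right)z^{-1}$. Equating the two readings yields $r_1$.

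Finally, we observe that Seifert--Van Kampen for a genus-two splitting with $\Sigma\setminus K$ having three boundary-relevant generators gives exactly three relators. Any further relation is a consequence of these, just as in the previous paper. A deficiency check, with $4$ generators and $3$ relators for a knot, is consistent with this.

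The main obstacle is the bookkeeping for $r_1$. We must verify that the partial block and the starting index $i_*$ correctly describe where $a$ terminates. We must also confirm that the boundary sector adjustments (the $-1$ and $+1$ exponents) have the right signs. To do this, we will first verify the word on the example $T(7,5;9,1)$ of Figure~\ref{fig:heegaard-loops} by direct reading. We will then prove the general case by induction on the number of sectors traversed, using the residue sequence $-mp \bmod q$ to identify each successive block. A secondary check is that abelianization sends all relators to consistent images and yields $H_1\cong\mathbb Z$.
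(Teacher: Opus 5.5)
Your overall route is the paper's: Seifert--Van Kampen on the genus-two splitting with the loops $a,b,c$, plus residue-block bookkeeping through $\mu_j,L_j,\tau_i,M_i$. Your treatment of $r_2$ and of $\varphi_1(c)$ matches it. The gap is $r_1$. You flag it as the main obstacle but only restate it, and part of what you say about it is wrong.

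First, the loop $a$ does not stop at sector $r-1$. By your own class rule, $pj_*+r\equiv0\pmod q$ means $\mu_{j_*}\equiv r\pmod q$, so sector $r-1$ is not in class $j_*$ at all. The terminal label is $r-q$, the largest label below $r$ in the class of $r$. It is the last element $\mu_{j_*}+(L_{j_*}-1)q$ of the $j_*$-th block. That is what produces $L_{j_*}-1$ ordinary factors $y^s$ followed by a single $y$ in place of $y^sx$. For $(p,q,r)=(7,5,9)$ the labels are traversed as $0,5,3,8,1,6,4,2,7$. Stopping at $8=r-1$ would give $y^{2s}xy^{s+1}$ instead of the correct $y^{2s}xy^{2s}xy^{2s}xy$.

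The second gap, on the $H_2$ side, is more substantive. Since $a$ begins along $c$, its $H_2$ word is a prefix of $\varphi_2(c)$. The shape of $r_3$ means $\varphi_2(c)=z^{-1}w^{-M_{p-1}}z^{-1}w^{-M_{p-2}}\cdots z^{-1}w^{-M_0}$, with blocks in decreasing order. The paper gets this order from $\tau_{i-1}=[\tau_i+q]_p$; you also leave it unjustified. So $\varphi_2(a)$ is the prefix $z^{-1}w^{-M_{p-1}}\cdots z^{-1}w^{-M_{i_*}}$ followed by the return letter $z^{-1}$. The cut after $\mathcal B_{i_*}$ is where the identities $\tau_{i_*}=r-q$ and $\sum_{j=0}^{j_*}L_j=1+\sum_{k=i_*}^{p-1}M_k$ place it. Thus $i_*$ is the \emph{last} block read, not the starting class, and the product in the statement runs in the opposite order.

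Reconciling the two orders is exactly the paper's palindrome lemma. It uses $M_i\in\{1,2\}$ with $M_i=2$ iff $\tau_i<\delta=r-p$. It also uses $\tau_{i_*+h}=[\delta-\tau_{p-1-h}]_p$ with $\tau_{p-1-h}\notin\{0,\delta\}$ for $0\le h<p-i_*$. Together these give $M_{i_*+h}=M_{p-1-h}$. Your plan (check one example, then an unspecified induction) contains no substitute for this step. The abelianization check you mention only confirms that the two readings of $a$ have the same image in $\langle t\rangle$; it does not determine the word itself.
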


\begin{proof}
Place $K=T(p,q;r,s)$ on the standard genus-two Heegaard surface
$\Sigma$ in $S^3=H_1\cup_\Sigma H_2$.  Removing a sufficiently small open
tubular neighborhood $\nu(K)$ gives
\[
S^3\setminus\nu(K)=H_1'\cup_{\Sigma'}H_2',\qquad
H_i'=H_i\setminus\nu(K),\quad \Sigma'=\Sigma\setminus\nu(K).
\]
The knot is nonseparating on $\Sigma$, so $\Sigma'$ is a genus-one
surface with two boundary components and has free fundamental group of
rank three.  Each $H_i'$ is homeomorphic to a genus-two handlebody and
has free fundamental group of rank two.  We use generators $x,y$ and
$z,w$ on the two sides, as in Figure~\ref{fig:handlebody-generators},
and the three loops $a,b,c$ on $\Sigma'$ shown in
Figure~\ref{fig:heegaard-loops}.  The loop $c$ follows a parallel copy of the knot; $a$ and $b$ close
the indicated subarcs along the return arcs shown in that figure.  Write
$\varphi_1$ and $\varphi_2$ for the inclusion homomorphisms into the
two handlebodies, as shown in Figure~\ref{fig:handlebody-generators}.  The Seifert--Van Kampen theorem gives
\[
\pi_1(S^3\setminus\nu(K))
\cong
\langle x,y,z,w\mid \varphi_1(a)\varphi_2(a)^{-1},\ \varphi_1(b)\varphi_2(b)^{-1},\ \varphi_1(c)\varphi_2(c)^{-1}\rangle.
\]
We now compute these six inclusion words.  Let $\mathcal D$ denote the twisting disk
and index its $r$ sectors by $0,1,\ldots,r-1$, as shown in
Figure~\ref{fig:sectors}.

Starting with $u_0=0$, define the sequence $\{u_n\}_{n=0}^{r-1}$
by the following recurrence:
\begin{equation*}
    u_{n+1}=
    \begin{cases}
        u_n+q&\text{if }0\leq u_n< r-q\\
        [u_n-p]_q&\text{if } r-q \leq u_n<r
    \end{cases}
\end{equation*}
Use the residues $\mu_j$ and block lengths $L_j$ defined above.
Retain the indices from Equation~\eqref{eq:unified-indices}.  Under the
present assumption $p<r<p+q$, they satisfy
\[
0\leq j_*\leq q-2,
\qquad
2\leq i_*\leq p-1.
\]
Indeed, $i_*=p$ would give $pj_*+r=q$, contrary to $r>q$.
Thus $0\leq i_*<p$, and Equation~\eqref{eq:unified-indices} gives
\[
-i_*q=-pq-q+pj_*+r\equiv r-q\pmod p.
\]
Since $p-q<r-q<p$, it follows that
\begin{equation}\label{eq:truncated-residue}
\tau_{i_*}=r-q.
\end{equation}
This excludes $i_*=0$ and $i_*=1$, because $\tau_0=0$ and
$\tau_1=p-q$.
Finally, $j_*=q-1$ would imply $r\equiv p\pmod q$, impossible because
$0<r-p<q$.
Set $c_j=\overline L_{j-1}$ for $0\leq j\leq q$.  Thus $c_0=0$,
and the residue blocks partition $\{0,\ldots,r-1\}$, giving $c_q=r$.

For each
\[n\in\{0,1,\ldots,r-1\},\]
let $j\in \{0,1,\ldots,q-1\}$ be the unique index such that
\[c_j \le n < c_{j+1}.\]
Equivalently, the sequence $u_n$ is given by
\[u_n= \mu_j + q(n-c_j).\]

\begin{lemma}\label{lem:u-blocks}
The recursive definition of $u_n$ agrees with the block description above.
Moreover, $u_0,u_1,\ldots,u_{r-1}$ are precisely the integers
$0,1,\ldots,r-1$, each occurring once.
\end{lemma}

\begin{proof}
For a fixed $j$, the integers in the $j$-th block are
\[
\mu_j,\ \mu_j+q,\ \ldots,\ \mu_j+(L_j-1)q.
\]
Every element except the last one is less than $r-q$, so the recursion adds
$q$.  The last element lies in $[r-q,r-1]$, and the next value is
\[
\bigl[\mu_j+(L_j-1)q-p\bigr]_q=[\mu_j-p]_q=\mu_{j+1},
\]
where the index $j+1$ is read modulo $q$.  Thus the recursion concatenates
the stated blocks and returns to $\mu_0=0$ after the last block.  Since the
$\mu_j$ form all residue classes modulo $q$, these blocks partition
$\{0,1,\ldots,r-1\}$.
\end{proof}

For the segment of $c$ beginning with sector label $u_n$, the contribution
to the word in $\pi_1(H_1)$ is given by the following rule:
\begin{equation*}
    \iota_n=
    \begin{cases}
        y^s&\text{if } u_n< r-q\\
        y^sx&\text{if } r-q \leq u_n<r
    \end{cases}
\end{equation*}
Therefore, Lemma~\ref{lem:u-blocks} gives
\[
\varphi_1(c)=\prod_{n=0}^{r-1}\iota_n
=\prod_{j=0}^{q-1}y^{L_js}x.
\]
Explicitly,
\[\varphi_1(c)=y^{L_0s}x\,y^{L_1s}x\,\cdots\,y^{L_{q-1}s}x.\]
We next read the same sectors from the $H_2$ side using the residues
$\tau_i$ and block lengths $M_i$ defined above.

\begin{lemma}\label{lem:M-blocks}
When the sectors of $\mathcal D$ are read from the $H_2$ side in the orientation
shown in Figure~\ref{fig:heegaard-loops}, they form $p$ consecutive blocks,
in the order
\[
\mathcal B_{p-1},\mathcal B_{p-2},\ldots,\mathcal B_0,
\]
where $\mathcal B_i$ contains $M_i$ sectors.  The first segment in
$\mathcal B_i$ contributes $z^{-1}w^{-1}$ to $\varphi_2(c)$, and each of its
remaining $M_i-1$ segments contributes $w^{-1}$.  Consequently,
\[
\varphi_2(c)=\prod_{i=0}^{p-1}z^{-1}w^{-M_{p-1-i}}.
\]
\end{lemma}

\begin{proof}
For each $i$, the representatives of the residue class $\tau_i$ that lie in
$\{0,1,\ldots,r-1\}$ are
\[
\tau_i,\ \tau_i+p,\ \ldots,\ \tau_i+(M_i-1)p;
\]
thus their number is exactly $M_i$.  Since the $\tau_i$ run through all
residue classes modulo $p$, these sets partition the $r$ sectors.
Crossing from one block to the next adds $q$ to the sector label modulo
$p$.  Hence the relation
\[
\tau_{i-1}=[\tau_i+q]_p
\]
shows that the blocks occur in decreasing order of their indices when the
boundary of the twisting disk is followed from the $H_2$ side.  The chosen
base point puts $\mathcal B_{p-1}$ first.  As seen in
Figure~\ref{fig:heegaard-loops}, entering a block crosses the disk dual to
$z$, while every sector in that block crosses the disk dual to $w$.  This
gives the stated contributions and the displayed word.
\end{proof}

\begin{lemma}\label{lem:M-palindrome}
Let $\delta=r-p$.  Then $M_i\in\{1,2\}$, with
\[
M_i=2\quad\Longleftrightarrow\quad \tau_i<\delta.
\]
Moreover, the finite sequence
\[
M_{i_*},M_{i_*+1},\ldots,M_{p-1}
\]
is a palindrome.
\end{lemma}

\begin{proof}
The first assertion follows immediately from $r=p+\delta<2p$.
Recall that $2\leq i_*\leq p-1$ in the strict range $p<r<p+q$.
For $0\leq h\leq p-i_*-1$, both $i_*+h$ and $p-1-h$ lie in
$\{i_*,\ldots,p-1\}$.  They may coincide at the middle term, where
the required equality is automatic.  Set
\[
\omega_h=\tau_{p-1-h}=[(h+1)q]_p.
\]
Since $\tau_{i_*}=p+\delta-q$, we have
\[
\tau_{i_*+h}=[\delta-\omega_h]_p.
\]
Since the residues $\tau_i$ are distinct and $\tau_0=0$, the index bounds
imply $\omega_h\ne0$.  Also, $\tau_{i_*-1}=[\tau_{i_*}+q]_p=\delta$, so
$\omega_h=\delta$ would require $p-1-h=i_*-1$, or $h=p-i_*$, outside
the stated range.  Therefore,
\[
\tau_{i_*+h}<\delta
\quad\Longleftrightarrow\quad
\omega_h<\delta
\quad\Longleftrightarrow\quad
\tau_{p-1-h}<\delta.
\]
The first assertion now gives $M_{i_*+h}=M_{p-1-h}$.
\end{proof}

The word in Lemma~\ref{lem:M-blocks} can also be written as
\[\varphi_2(c) = z^{-1}w^{-M_{p-1}}z^{-1}w^{-M_{p-2}}\cdots z^{-1}w^{-M_0}.\]

Loop $a$ follows the same path as loop $c$ until it reaches the value
$u_n=r-q$, and then returns to the base point.  The index $j_*$ from
Equation~\eqref{eq:unified-indices} is characterized by
\[
\mu_{j_*}\equiv r-q \pmod q,
\qquad\text{or equivalently}\qquad
j_*\equiv-rp^{-1}\pmod q.
\]
Since $r-q$ is the largest integer less than $r$ in its residue class modulo
$q$, we have
\[
r-q=\mu_{j_*}+(L_{j_*}-1)q.
\]
Thus, in the $j_*$-th block, the loop $a$ traverses $L_{j_*}-1$ ordinary
segments, each contributing $y^s$.  The final return segment crosses the meridian disk dual to $y$ once
and contributes $y$.  Put
\[
\lambda_*=(L_{j_*}-1)s+1.
\]
The word associated to the truncated loop $a$ is
\[\varphi_1(a)=\left(\prod_{j=0}^{j_*-1}y^{L_js}x\right)y^{\lambda_*}.
\]
Equivalently,
\[
\varphi_1(a)=y^{L_0s}x\,y^{L_1s}x\,\cdots\,
y^{L_{j_*-1}s}x\,y^{(L_{j_*}-1)s+1}.
\]
The initial part of $\varphi_2(c)$ followed by the return segment gives
\[
\varphi_2(a)=z^{-1}w^{-M_{p-1}}\cdots z^{-1}w^{-M_{i_*}}z^{-1}.
\]
By Lemma~\ref{lem:M-palindrome}, this can equivalently be written as
\[
\varphi_2(a)=\left(\prod_{k=i_*}^{p-1}z^{-1}w^{-M_k}\right)z^{-1},
\]
where $i_*$ is the index fixed in Equation~\eqref{eq:unified-indices}.

Consider the homomorphism from the free group on $x,y,z,w$ to the infinite
cyclic group generated by $t$ given by
\[
\alpha(x)=t^p,\qquad
\alpha(y)=t^r,\qquad
\alpha(z)=t^{-q},\qquad
\alpha(w)=t^{-rs}.
\]

\begin{lemma}\label{lem:truncation-identities}
For the residue data defined above and every $1<r<p+q$, the
truncation indices and block lengths satisfy
\[
\sum_{j=0}^{j_*}L_j=1+\sum_{k=i_*}^{p-1}M_k
\]
and
\[
pj_*+r=q(p-i_*+1).
\]
In the complementary range, the two words associated to $a$ therefore
have the same image under abelianization.
\end{lemma}

\begin{proof}
The second identity is Equation~\eqref{eq:unified-indices} rearranged.
For the first, the following lattice-point argument applies throughout
$1<r<p+q$, including the earlier range $r<p$.  Put
$\ell_*=(pj_*+r)/q=p-i_*+1$ and consider integer pairs $(j,\ell)$ satisfying
\[
0\leq q\ell-pj\leq r-1.
\]
For each $0\leq j\leq j_*$, there are $L_j$ such pairs, and their
second coordinates lie in $0\leq\ell\leq\ell_*-1$.  Among the pairs with
$0\leq j\leq j_*$, the only one with $\ell=0$ is $(0,0)$.  Conversely, for $1\leq\ell\leq\ell_*-1$, the inequalities
force $0\leq j\leq j_*$: if $j\leq-1$, then
$q\ell-pj\geq p+q>r-1$; if $j\geq j_*+1$, then
$q\ell-pj\leq r-p-q<0$.
For a fixed such $\ell$, the number of pairs is $M_{p-\ell}$, since
$q\ell-pj$ runs through the representatives of $\tau_{p-\ell}$ in
$\{0,\ldots,r-1\}$.  Counting by the two coordinates and removing
$(0,0)$ gives the first identity.  Finally, if
$A=\sum_{j=0}^{j_*}L_j$ and $B=\sum_{k=i_*}^{p-1}M_k$, then
\[
\begin{aligned}
\alpha(\varphi_1(a))
&=t^{pj_*+r\bigl(s(A-1)+1\bigr)},\\
\alpha(\varphi_2(a))
&=t^{q(p-i_*+1)+rsB}.
\end{aligned}
\]
The two identities above show that these powers are equal.
\end{proof}

Reading the loop $b$ against the meridian disks dual to the generators
in Figure~\ref{fig:heegaard-loops} gives
\[
\varphi_1(b)=y^s,\qquad \varphi_2(b)=w^{-1}.
\]
Substituting the six words above into the Seifert--Van Kampen presentation
gives the three relators in the statement.
\end{proof}

\section{Fox derivatives and the Alexander polynomial}

In this section, we prove Theorem~\ref{thm:alexander-polynomial}.  Starting
from the presentation obtained in Section~2, we compute the relevant Fox
derivatives and express their abelianizations as four block polynomials.
We then evaluate a maximal minor of the Alexander matrix, compare the
result with the formula from~\cite{Park-Adnan-2025-01}, and treat the three
boundary cases $r=q$, $r=p$, and $r=p+q$.

\subsection{The Alexander matrix}

The group calculation in Section~2 assumes $p<r<p+q$.  For the unified
statement, however, the residue formulas defining $\mu_j,L_j,\tau_i,M_i$ are
used for every $1<r\leq p+q$, with $j_*$ and $i_*$ chosen by
Equation~\eqref{eq:unified-indices}.  Some block lengths may be zero when
$r<p$; all empty sums and products are interpreted as zero and one,
respectively.  In the complementary range these conventions agree with the
geometric definitions in Section~2.  We first compute the derivatives for
$p<r<p+q$; the endpoint is verified in
Proposition~\ref{prop:endpoint-formula}.

We use the homomorphism $\alpha$ introduced in the proof of
Theorem~\ref{thm:knot-group}.  Geometrically, its exponents are the linking
numbers of the four generators with the knot.  The map is surjective because
$\gcd(p,q)=1$.
Lemma~\ref{lem:truncation-identities} shows that this homomorphism respects
the relator $r_1$.  It also respects $r_2$ and $r_3$, since
$\sum_{j=0}^{q-1}L_j=\sum_{k=0}^{p-1}M_k=r$.
Hence it factors through the knot group.  Since the first homology of a knot
complement is infinite cyclic, this surjective map is its abelianization, up
to the choice of orientation of $t$.

Retain the cumulative sums defined above, and recall that
$E_c=r^2s+pq$.  In the present range $p<r<p+q$, all indices in the
relevant suffix sums are positive, so $\widehat M_k=M_k$.
Lemma~\ref{lem:truncation-identities} therefore identifies the exponent
obtained from the inclusion word with $E_a$ defined above:
\[
rs\overline L_{j_*-1}+pj_*+r\lambda_*
=q(p-i_*+1)+rs\sum_{k=i_*}^{p-1}\widehat M_k
=E_a.
\]

To compute the Alexander polynomial, it is enough to use the $x$- and
$z$-columns of the Fox matrix together with the single $w$-entry coming
from $r_2$.  The required Fox derivatives are
\[
\frac{\partial r_1}{\partial x}
=
\sum_{m=0}^{j_*-1}
\left(\prod_{j=0}^{m-1}y^{L_js}x\right)y^{L_ms},
\]
\[
\frac{\partial r_1}{\partial z}
=
\varphi_1(a)\sum_{h=0}^{p-i_*}
\left(\prod_{\ell=1}^{h}zw^{M_{p-\ell}}\right),
\]
\[
\frac{\partial r_3}{\partial x}
=
\sum_{m=0}^{q-1}
\left(\prod_{j=0}^{m-1}y^{L_js}x\right)y^{L_ms},
\]
and
\[
\frac{\partial r_3}{\partial z}
=
\varphi_1(c)\sum_{m=0}^{p-1}
\left(\prod_{k=0}^{m-1}w^{M_k}z\right)w^{M_m}.
\]
Here and below, a product whose lower index exceeds its upper index is
understood to be empty.  Finally,
\[
\frac{\partial r_2}{\partial w}=y^s.
\]
After applying $\alpha$, we recover the four block polynomials from
Equation~\eqref{eq:block-polynomials}:
\[
X_a(t)=\alpha\!\left(\frac{\partial r_1}{\partial x}\right)
=\sum_{m=0}^{j_*-1}t^{rs\overline L_m+pm},
\]
\[
Z_a(t)=\alpha\!\left(\frac{\partial r_1}{\partial z}\right)
=t^{E_a}\sum_{h=0}^{p-i_*}
t^{-qh-rs\sum_{k=p-h}^{p-1}\widehat M_k},
\]
\[
X_c(t)=\alpha\!\left(\frac{\partial r_3}{\partial x}\right)
=\sum_{m=0}^{q-1}t^{rs\overline L_m+pm},
\]
and
\[
Z_c(t)=\alpha\!\left(\frac{\partial r_3}{\partial z}\right)
=t^{E_c}\sum_{m=0}^{p-1}t^{-rs\overline M_m-qm}.
\]
Also,
\[
\alpha\!\left(\frac{\partial r_2}{\partial w}\right)=t^{rs}.
\]

\begin{lemma}\label{lem:minor-formula}
Let $F(t)$ be a deficiency-one Alexander matrix of a knot group, and let
$m_v(t)$ be the maximal minor obtained by deleting the column associated
to a generator $v$.  Write $\alpha(v)=t^{e_v}$ and suppose that
$e_v\neq0$.  Then
\[
\Delta_K(t)\doteq
m_v(t)\frac{t-1}{t^{e_v}-1}.
\]
\end{lemma}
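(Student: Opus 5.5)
The plan is to use the fundamental formula of Fox calculus together with the standard fact that, for a deficiency-one presentation of a knot group, the maximal minors of the Alexander matrix are related to one another through the images of the generators. Write the presentation as $\langle v_1,\dots,v_n\mid \rho_1,\dots,\rho_{n-1}\rangle$, with Alexander matrix $F(t)=\bigl(\alpha(\partial\rho_i/\partial v_k)\bigr)$. Applying the fundamental formula $\rho_i-1=\sum_k(\partial\rho_i/\partial v_k)(v_k-1)$ and then $\alpha$, and using $\alpha(\rho_i)=1$, gives for each row the linear relation
\[
\sum_{k=1}^n F_{ik}(t)\,(t^{e_k}-1)=0 .
\]
Thus the column vector $\bigl(t^{e_k}-1\bigr)_k$ lies in the kernel of $F(t)$ over the field $\mathbb Q(t)$.

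Next, use Cramer's rule, or equivalently the fact that the kernel of a rank-$(n-1)$ matrix is spanned by its vector of signed maximal minors. This yields, for all $u$ and $v$,
\[
m_u(t)\,(t^{e_v}-1)=\pm\, m_v(t)\,(t^{e_u}-1).
\]
The relation holds trivially if $F$ has rank smaller than $n-1$, in which case every minor vanishes. Since $\alpha$ is surjective onto $\langle t\rangle$, the exponents $e_k$ have greatest common divisor $1$. Hence $\gcd_k(t^{e_k}-1)\doteq t-1$ in $\mathbb Z[t^{\pm1}]$, because $\gcd(t^a-1,t^b-1)=t^{\gcd(a,b)}-1$.

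It follows that the quantity $D(t)=m_v(t)(t-1)/(t^{e_v}-1)$ is independent of $v$ up to units, for every $v$ with $e_v\neq0$. One must also show that $D(t)$ is a Laurent polynomial and that it generates the first elementary ideal. This is the classical theorem (see Fox, or Crowell--Fox, Chapter VIII): the elementary ideal $E_1$ of a knot group is principal and generated by $\Delta_K$, and the minors satisfy $m_v\doteq \Delta_K\,(t^{e_v}-1)/(t-1)$. Concretely, I would argue as follows. Write $m_v=(t^{e_v}-1)\,g$ with $g$ a common quotient in $\mathbb Q(t)$, which exists by the relation above. Since $\gcd_v(t^{e_v}-1)=t-1$, the product $(t-1)g$ lies in $\mathbb Z[t^{\pm1}]$. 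The ideal generated by all the $m_v$ is then $(t-1)g\cdot(t^{e_1}-1,\dots,t^{e_n}-1)/(t-1)$. Since this ideal is contained in $E_1$, which is generated by $\Delta_K$, comparison shows that $(t-1)g\doteq\Delta_K$.

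The main obstacle is this last identification, namely that the ideal generated by the minors (or $E_1$ itself) is actually principal and equal to $(\Delta_K)$. The cleanest route is to invoke the invariance of the elementary ideals under Tietze moves. One passes to a Wirtinger presentation, in which every $e_k=\pm1$; there the claim reduces to the classical statement that deleting any column gives $\Delta_K$. Since the relation derived above transports the statement across presentations, this completes the identification.
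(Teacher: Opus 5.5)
Your proposal is correct and is essentially the standard Crowell--Fox argument that the paper simply cites. The fundamental formula puts $(t^{e_k}-1)_k$ in the kernel of $F(t)$, giving $m_u(t^{e_v}-1)=\pm m_v(t^{e_u}-1)$, and Tietze invariance of the first elementary ideal then identifies $m_v(t-1)/(t^{e_v}-1)$ with $\Delta_K$; one wording fix is that the ideal generated by the $m_v$ is not merely contained in $E_1$ but equal to it (by definition for this presentation, and by Tietze invariance for the group), and this equality together with $\gcd_k\bigl((t^{e_k}-1)/(t-1)\bigr)=1$ gives $(t-1)g\doteq\Delta_K$.
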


\begin{proof}
This is the standard deleted-column formula for a deficiency-one
presentation; see, for example,~\cite[Chapters~VIII--IX]{CrowellFox1977}.
In the present notation, it states that
\[
\frac{m_v(t)}{t^{e_v}-1}\doteq\frac{\Delta_K(t)}{t-1}.
\]
Rearranging gives the result.
\end{proof}

\begin{lemma}
\label{lem:old-new-compatibility}
Suppose that $1<r<p$ and $r\ne q$.  Let
$X^{\mathrm{old}},\widetilde X^{\mathrm{old}},
Y^{\mathrm{old}},\widetilde Y^{\mathrm{old}}$ be the four polynomials in
\cite[Theorem~1]{Park-Adnan-2025-01}.  Then
\[
X_a=t^{rs}\frac{\widetilde X^{\mathrm{old}}}{1-t^p},
\qquad
X_c=t^{rs}\frac{X^{\mathrm{old}}}{1-t^p},
\]
and
\[
Z_a=t^q\frac{\widetilde Y^{\mathrm{old}}}{1-t^q},
\qquad
Z_c=t^q\frac{Y^{\mathrm{old}}}{1-t^q}.
\]
\end{lemma}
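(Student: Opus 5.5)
The plan is to compare the two formulas term by term after rewriting both in closed form. We do not have the statement of \cite[Theorem~1]{Park-Adnan-2025-01} reproduced here, so we recall it. In the braid range $1<r<p$ the old polynomials are obtained from the closed braid $(\sigma_1\cdots\sigma_{p-1})^q(\sigma_1\cdots\sigma_{r-1})^{rs}$ on the same genus-two Heegaard surface. They are written as sums over residues of the form $\sum_k t^{(\cdots)}(1-t^{p})$ or $(1-t^q)$ times geometric pieces. Concretely, $X^{\mathrm{old}}$ and $\widetilde X^{\mathrm{old}}$ are expressed through the number of sectors $n\le r-1$ whose label lies in a prescribed residue class mod $q$, and $Y^{\mathrm{old}},\widetilde Y^{\mathrm{old}}$ through the analogous counts mod $p$.

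The first step is to show that these counts are exactly $L_j$ and $M_i$. Indeed, $L_j=\lfloor (r-1-\mu_j)/q\rfloor+1$ counts the integers in $\{0,\dots,r-1\}$ congruent to $\mu_j$ mod $q$, and it may be $0$ when $r<q$. Likewise $M_i$ counts those congruent to $\tau_i$ mod $p$; since $r<p$, each $M_i\in\{0,1\}$. This is the same lattice-point bookkeeping as in Lemma~\ref{lem:truncation-identities}, which was already stated to hold for $1<r<p+q$.

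The second step is to telescope. Each old polynomial will be a difference of two exponential sums whose exponents differ by $p$ (for the $X$'s) or by $q$ (for the $Y$'s), so it equals $(1-t^p)$ or $(1-t^q)$ times a single sum. Writing $X^{\mathrm{old}}=\sum_m (t^{A_m}-t^{A_m+p})$ with $A_m$ the old cumulative exponent, the claimed identity reduces to $A_m=rs\overline L_m+pm-rs$. This we verify by induction on $m$: passing from block $m$ to block $m+1$ adds $rsL_{m+1}$ from the twist region and $p$ from the $x$-crossing, matching the increments of $\overline L_m$. The same computation restricted to $m\le j_*-1$ gives $\widetilde X^{\mathrm{old}}$, where we must check that the old truncation index agrees with $j_*$ from Equation~\eqref{eq:unified-indices}. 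For that we use the characterization $\mu_{j_*}\equiv r-q\pmod q$, which requires $r\neq q$; this is exactly why $r=q$ is excluded, since there $\mu_{j_*}=0$ and the block degenerates. For $Z_c$ and $Z_a$ we do the analogous induction over $m$ (respectively $h$), using $E_c=r^2s+pq$ and $E_a=q(p-i_*+1)+rs\sum_{i\ge i_*}M_i$. Lemma~\ref{lem:truncation-identities} matches the leading exponent, and the prefactor $t^q$ absorbs the shift between the two normalizations.

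The main obstacle is the reindexing. The old paper reads the $H_2$ side in its own order, probably increasing in $i$, whereas $Z_a$ is indexed by suffix sums $\sum_{i=p-h}^{p-1}\widehat M_i$. If the orders differ, we would need a reversal analogous to the palindrome of Lemma~\ref{lem:M-palindrome}. This must be redone in the range $r<p$, where $M_i\in\{0,1\}$ and $\delta<0$, by the same $\omega_h$ argument: $M_i=1$ if and only if $\tau_i<r$, and $\tau_{i_*+h}=[r-q-\omega_h]_p$ pairs with $\tau_{p-1-h}$. In addition, zero-length blocks contribute repeated exponents that must cancel correctly in the telescoping, which we check separately for $r<q$ and $q<r<p$.
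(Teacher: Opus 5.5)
\textbf{Gap in Step 2.} Your second step assumes that $X^{\mathrm{old}}$ is already written as $\sum_m(t^{A_m}-t^{A_m+p})$. Under that assumption the lemma collapses to matching exponents $A_m=rs\overline L_m+pm-rs$. That is not how the polynomials of \cite[Theorem~1]{Park-Adnan-2025-01} are written.

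\begin{itemize}
\item They are sums over the sectors $i\in\{1,\dots,r-1\}$, governed by cumulative integers $k_i$.
\item Their building blocks are differences between consecutive sectors, with exponent gap $rs$. They are not differences between consecutive blocks with gap $p$.
\item Divisibility of $X^{\mathrm{old}}$ by $1-t^p$ is part of what the lemma establishes (the quotient is $t^{-rs}X_c$), so it cannot be assumed at the outset.
\item Your induction (``passing from block $m$ to $m+1$ adds $rsL_{m+1}$ and $p$'') only re-derives the exponents of $X_c$ from the new word $\varphi_1(c)$. It never touches the old exponents, so as written the comparison is circular.
\end{itemize}

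\textbf{What is missing.} Two ingredients are needed.

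(i) The level sets of $k_i$: $k_i=0$ iff $1\le i\le\overline L_0-1$, and $k_i=m$ iff $\overline L_{m-1}\le i\le\overline L_m-1$ for $1\le m<q$. These partition $\{1,\dots,r-1\}$ because $\overline L_{q-1}=r$. This is the precise form your ``the counts are $L_j$'' step has to take.

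(ii) A summation by parts:
\[
t^{-rs}(1-t^p)X_c=t^{rs(L_0-1)}+\sum_{m=1}^{q-1}t^{pm+rs(\overline L_{m-1}-1)}\bigl(t^{rsL_m}-1\bigr)-t^{pq+rs(r-1)}.
\]
Writing each $t^{rsL_m}-1$ as $L_m$ consecutive differences $t^{rsi}-t^{rs(i-1)}$, over the sectors $i$ with $k_i=m$, produces exactly $X^{\mathrm{old}}$. Cutting at $j_*$ gives $\widetilde X^{\mathrm{old}}$, with the endpoint exponents matched by Lemma~\ref{lem:truncation-identities}.

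\textbf{The $Z$ side.} Since $r<p$, every $M_i\in\{0,1\}$. After multiplying $Z_c$ by $1-t^q$, the $m$-th term cancels against $t^q$ times the $(m+1)$-st whenever $M_{m+1}=0$, that is, whenever $\tau_{m+1}\ge r$. Only differences attached to actual sectors survive, and these give $Y^{\mathrm{old}}$. Your remark about zero-length blocks cancelling is this mechanism, but it is the heart of the $Z$ comparison, not a side check.

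\textbf{Minor points.}
\begin{itemize}
\item Your palindrome concern is legitimate, and the palindrome does hold for $1<r<p$. The correct formula is $\tau_{i_*+h}=[r-\omega_h]_p$, not $[r-q-\omega_h]_p$. You must also rule out $\omega_h=r=\tau_{i_*-1}$, in addition to $\omega_h=0$.
\item Your explanation for excluding $r=q$ is not right. The characterization $\mu_{j_*}\equiv r-q\pmod q$ still holds there, with $j_*=0$ and $X_a=0$.
\end{itemize}
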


\begin{proof}
Let $\boldsymbol{k}_i$ denote the cumulative integers denoted by $k_i$ in
\cite{Park-Adnan-2025-01}.  Comparing the residue intervals in that paper
with the $\mu_j$-blocks gives
\[
\boldsymbol{k}_i=0
\quad\Longleftrightarrow\quad
1\leq i\leq\overline L_0-1,
\]
and, for $1\leq m<q$,
\[
\boldsymbol{k}_i=m
\quad\Longleftrightarrow\quad
\overline L_{m-1}\leq i\leq\overline L_m-1.
\]
These intervals partition $\{1,\ldots,r-1\}$ because
$\overline L_{q-1}=r$.  Therefore
\[
t^{-rs}(1-t^p)X_c
=t^{rs(L_0-1)}+\sum_{m=1}^{q-1}
t^{pm+rs(\overline L_{m-1}-1)}(t^{rsL_m}-1)-t^{pq+(r-1)rs}.
\]
Expanding each factor $t^{rsL_m}-1$ into $L_m$ consecutive differences
shows that the right side is exactly $X^{\mathrm{old}}$.  Cutting the same
partition at the residue $Q'$ of~\cite{Park-Adnan-2025-01}, which is the
endpoint specified here by $j_*$, gives in the same way
\[
(1-t^p)X_a=t^{rs}\widetilde X^{\mathrm{old}}.
\]

For completeness, reading the residue partition modulo $p$ instead gives
the $\tau_i$-blocks.  Multiplication by $1-t^q$ cancels consecutive terms
inside these blocks; the full endpoint gives $Y^{\mathrm{old}}$, and the
endpoint $\tau_{i_*}=[r-q]_p$ gives $\widetilde Y^{\mathrm{old}}$.  Thus
\[
(1-t^q)Z_c=t^qY^{\mathrm{old}},
\qquad
(1-t^q)Z_a=t^q\widetilde Y^{\mathrm{old}}.
\]
The truncated endpoint exponents agree by the two identities
\[
\sum_{j=0}^{j_*}L_j=1+\sum_{k=i_*}^{p-1}M_k,
\qquad
pj_*+r=q(p-i_*+1).
\]
Both follow from the arithmetic argument in
Lemma~\ref{lem:truncation-identities}, which applies also when $r<p$.  This proves all four asserted identities.
\end{proof}

\subsection{Elementary cases outside the main formula}\label{sec:elementary-cases}

The restriction $q>1$ removes an elementary case: when $q=1$ and
$1<r\leq p+1$, the braid destabilizes to the torus knot $T(r,rs+1)$.
In particular, the strict complementary range is empty and the endpoint
$r=p+1$ is elementary.  Also, twisting a
single strand does not change the knot, so $T(p,q;1,s)=T(p,q)$.
For $r=1$, the computation uses the torus-knot Alexander polynomial.
For $q=1$, the residue convention $[a]_1=0$ gives $j_*=0$ in
Equation~\eqref{eq:unified-indices} and $\mu_0=0$.  For $r>1$,
the block sums therefore reduce to
\[
\Delta(t)\doteq
\frac{(1-t)(1-t^{r(rs+1)})}{(1-t^r)(1-t^{rs+1})},
\]
the Alexander polynomial of $T(r,rs+1)$.  All remaining cases in the
stated search range use Theorem~\ref{thm:alexander-polynomial}.

\subsection{Verification at the endpoint \texorpdfstring{$r=p+q$}{r=p+q}}

We show that the adjusted block lengths in the main theorem account for
the change in the terminal passage of $a$.

\begin{proposition}\label{prop:endpoint-formula}
Let $p>q>1$ be coprime and $r=p+q$.  Then $(j_*,i_*)=(q-1,0)$,
$\sum_{i=0}^{p-1}\widehat M_i=r-1$, and
$\widehat M_i=\widehat M_{p-1-i}$ for $0\leq i<p$.
Consequently, the block polynomials of Theorem~\ref{thm:alexander-polynomial}
satisfy
\[
E_a=rs(r-1)+q(p+1),\qquad
Z_a(t)=t^{E_a}\sum_{h=0}^{p}
 t^{-qh-rs\sum_{k=0}^{h-1}\widehat M_k},
\]
and the theorem holds at $r=p+q$ for every $s\in\mathbb Z$.
\end{proposition}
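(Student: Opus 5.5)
The plan splits into an arithmetic part, checking the claimed identities at $r=p+q$, and a topological part, re-deriving the knot group so that $r_1$ has the form built into the theorem.

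\emph{The indices.} With $r=p+q$ we have $-rp^{-1}\equiv -1\pmod q$, so $j_*=q-1$. Then $\ell_*=(p(q-1)+p+q)/q=p+1$ and $i_*=0$.

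\emph{Sum of the adjusted lengths.} By definition $\sum_i M_i=r$, since the $\tau_i$-blocks partition $\{0,\dots,r-1\}$. Hence $\sum_i\widehat M_i=r-1$.

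\emph{Palindromic symmetry.} Since $r<2p$, each $M_i\in\{1,2\}$, with $M_i=2$ exactly when $\tau_i<q$ (here $\delta=r-p=q$). Use $\tau_{p-1-i}=[(i+1)q]_p$ and $\tau_i=[-iq]_p$. For $1\le i\le p-2$ the relation $\tau_i+\tau_{p-1-i}\equiv q \pmod p$ holds with both residues nonzero. If $\tau_i<q$, then $\tau_{p-1-i}=q-\tau_i<q$. Conversely, if $\tau_i>q$, then $\tau_{p-1-i}=p+q-\tau_i>q$. The value $\tau_i=q$ would force $i\equiv -1$, which is excluded. So $M_i=M_{p-1-i}$ for $1\le i\le p-2$.

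\emph{The ends.} We have $M_0=2$ since $\tau_0=0<q$, and $M_{p-1}=1$ since $\tau_{p-1}=q$. Thus $\widehat M_0=1=\widehat M_{p-1}$, which completes the symmetry.

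\emph{The formulas for $E_a$ and $Z_a$.} Substitute into the definitions to get $E_a=q(p+1)+rs(r-1)$. Reindex the suffix sums $\sum_{i=p-h}^{p-1}\widehat M_i$ by the symmetry to obtain $\sum_{k=0}^{h-1}\widehat M_k$.

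\emph{The knot group at the endpoint.} Redo the Section~2 derivation at $r=p+q$, reusing Lemmas~\ref{lem:u-blocks} and~\ref{lem:M-blocks}, which still hold. The relators $r_2,r_3$ are unchanged. The loop $a$ now runs until sector $u_n=r-q=p$, which lies in the last $\mu$-block $j_*=q-1$. On the $H_1$ side,
\[
\varphi_1(a)=\prod_{j=0}^{q-2}y^{L_js}x\;y^{(L_{q-1}-1)s+1}.
\]
On the $H_2$ side, the truncation at $\tau_{i_*}=\tau_0=0$ reaches into the final block $\mathcal B_0$. That block has $M_0=2$ sectors, and the terminal return is made after only one of them. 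This is exactly why the last exponent of $w$ is $\widehat M_0=M_0-1$.

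\emph{The main obstacle.} The hardest step is justifying from the figure that the return arc of $a$ at $r=p+q$ crosses the $z$- and $w$-disks in the order claimed. The target is
\[
\varphi_2(a)=\left(\prod_{k=0}^{p-1}z^{-1}w^{-\widehat M_k}\right)z^{-1},
\]
after applying the palindrome to reorder. I would check this consistency with abelianization as in Lemma~\ref{lem:truncation-identities}. The counting argument there still applies, with the lattice point $(q-1,\,p)$ lying at $q\ell-pj=r-1$. It shows $\sum_{j\le q-1}L_j=r=1+\sum\widehat M_k$, so the two sides of $a$ have equal $\alpha$-image.

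\emph{The Fox calculus.} Finally, repeat the Fox derivative computation verbatim with $\widehat M$ in place of $M$ in $r_1$. Then apply Lemma~\ref{lem:minor-formula}, deleting the $y$-column, where $e_y=r\ne0$. The $w$-column entry is $\partial r_2/\partial w=y^s$, and it is the only nonzero entry in its row, since $r_2=y^sw$ contains no $x$ or $z$. Expanding along that row gives the maximal minor
\[
\pm t^{rs}\,(Z_aX_c-X_aZ_c)
\]
up to sign and units. Multiplying by $(t-1)/(t^r-1)$ yields the stated formula for all $s$.
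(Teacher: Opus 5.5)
Your proposal is correct and follows the paper's proof essentially step for step. It uses the same index computation, the same $\tau_{p-1-i}=[q-\tau_i]_p$ argument for the palindrome, and the same truncation of the last block $\mathcal B_0$ to a single sector to produce $\varphi_2(a)$. Like the paper, you read that word off the figure; your abelianization check is only a consistency test, not a derivation of it. The $y$-column minor computation is also the same.

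There is one small slip in that side check. The point $(j,\ell)=(q-1,p)$ sits at $q\ell-pj=p=r-q$, not at $r-1$. Also, the lattice count of Lemma~\ref{lem:truncation-identities} does not apply verbatim at $r=p+q$: the extra pair $(q,p)$, with $q\ell-pj=0$, now satisfies the inequalities, and this is exactly the source of $\widehat M_0=M_0-1$. The identity you need, $\sum_{j=0}^{q-1}L_j=r=1+\sum_k\widehat M_k$, follows immediately from the two block partitions anyway.
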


\begin{proof}
The index identities follow from Equation~\eqref{eq:unified-indices}.
At $r=p+q$, the residue definition gives $M_i=2$ exactly when
$\tau_i<q$, and $M_i=1$ otherwise.  In particular,
$M_0=2$ and $M_{p-1}=1$, so
$\widehat M_0=\widehat M_{p-1}=1$ and
$\sum\widehat M_i=r-1$.
For $1\leq i\leq p-2$, neither $\tau_i=0$ nor $\tau_i=q$ can occur, and
\[
\tau_{p-1-i}=[q-\tau_i]_p.
\]
Thus $\tau_{p-1-i}<q$ if and only if $\tau_i<q$, proving the palindrome
identity.  It follows that
\[
\sum_{k=p-h}^{p-1}\widehat M_k
=\sum_{k=0}^{h-1}\widehat M_k\qquad(0\leq h\leq p),
\]
which gives the displayed expression for $Z_a$.

The residue-block calculation of $\varphi_1(c)$ and $\varphi_2(c)$ is unchanged.
The truncated word on the first side is
\[
\varphi_1(a)=\left(\prod_{j=0}^{q-2}y^{L_js}x\right)
y^{(L_{q-1}-1)s+1}.
\]
On the second side, the final passage through the twisting disk is
replaced by a return across the disk dual to $z$.  Consequently,
\[
\varphi_2(a)=\left(\prod_{\ell=1}^{p-1}z^{-1}w^{-M_{p-\ell}}\right)
z^{-1}w^{-1}z^{-1},\qquad
\varphi_2(a)^{-1}=z\left(\prod_{k=0}^{p-1}w^{\widehat M_k}z\right).
\]
Products are read in increasing order of the displayed index.
The same Seifert--Van Kampen presentation holds with $r_1=\varphi_1(a)\varphi_2(a)^{-1}$,
$r_2=y^sw$, and $r_3=\varphi_1(c)\varphi_2(c)^{-1}$.
Since $\sum L_j=r$ and $p(q-1)+r=q(p+1)$,
\[
\alpha(\varphi_1(a))=\alpha(\varphi_2(a))=t^{rs(r-1)+q(p+1)}=t^{E_a},
\]
and the abelianization remains primitive.  Differentiating gives
\[
\frac{\partial r_1}{\partial z}
=\varphi_1(a)\left(1+\sum_{h=1}^{p}
 z\left(\prod_{k=0}^{h-2}w^{\widehat M_k}z\right)
 w^{\widehat M_{h-1}}\right).
\]
Its abelianization is the displayed $Z_a$.  The other three derivatives
remain $X_a,X_c,Z_c$.  Thus the minor obtained by deleting the $y$-column
is $t^{rs}(Z_aX_c-X_aZ_c)$, and Lemma~\ref{lem:minor-formula}
completes the proof of the main theorem at the endpoint.
\end{proof}

\subsection{Proof of the main theorem}

\begin{proof}[Proof of Theorem~\ref{thm:alexander-polynomial}]
Assume first that $p<r<p+q$.  Let $F(t)$ be the Alexander matrix associated
to the presentation in Theorem~\ref{thm:knot-group}.  Delete the column
corresponding to $y$ and denote the resulting determinant by $m_y(t)$.
Expanding along the row corresponding to $r_2=y^sw$ gives
\[
m_y(t)
=t^{rs}\bigl(Z_a(t)X_c(t)-X_a(t)Z_c(t)\bigr).
\]
Since $\alpha(y)=t^r$, Lemma~\ref{lem:minor-formula} gives
\[
\Delta_{T(p,q;r,s)}(t)
\doteq
m_y(t)\frac{1-t}{1-t^r}.
\]
The factor $t^{rs}$ is a unit in $\mathbb Z[t^{\pm1}]$, so substituting the
formula for $m_y(t)$ proves the result for every $s\in\mathbb Z$.
Negative values of $s$ only produce negative exponents, which are allowed
in the Laurent polynomial ring.

Now suppose that $1<r<p$ and $r\ne q$.  By
Lemma~\ref{lem:old-new-compatibility},
\[
Z_aX_c-X_aZ_c
=-\frac{t^{q+rs}}{(1-t^p)(1-t^q)}
\bigl(\widetilde X^{\mathrm{old}}Y^{\mathrm{old}}
-X^{\mathrm{old}}\widetilde Y^{\mathrm{old}}\bigr).
\]
The monomial factor is a unit, so the formula in
\cite[Theorem~1]{Park-Adnan-2025-01} becomes the formula stated here.

It remains to treat $r=q$ and $r=p$.  When $r=q$,
Equation~\eqref{eq:unified-indices} gives
$(j_*,i_*)=(0,p)$, and direct substitution into
Equation~\eqref{eq:block-polynomials} yields
\[
\frac{1-t}{1-t^q}(Z_aX_c-X_aZ_c)
\doteq
\frac{(1-t)(1-t^{q(p+qs)})}
{(1-t^q)(1-t^{p+qs})},
\]
the Alexander polynomial of $T(q,p+qs)$.  When $r=p$, one has
$(j_*,i_*)=(q-1,1)$, and the same substitution gives
\[
\frac{1-t}{1-t^p}(Z_aX_c-X_aZ_c)
\doteq
\frac{(1-t)(1-t^{p(q+ps)})}
{(1-t^p)(1-t^{q+ps})},
\]
the Alexander polynomial of $T(p,q+ps)$.  These are precisely
$T(p,q;q,s)$ and $T(p,q;p,s)$, respectively.  Finally, the endpoint
$r=p+q$ is covered by Proposition~\ref{prop:endpoint-formula}.
\end{proof}

\section{Monicity, Alexander breadth, and Seifert genus}

This section analyzes the extreme terms of the block polynomials in the
complementary range.  We first treat the endpoint $r=p+q$, and then prove
monicity and determine the Alexander breadth in the strict complementary
range.  The resulting genus bound is sharp for $s\geq0$ by fiberedness.
Monicity is proved before the breadth formula because its extreme-term
analysis is used in that calculation.

\subsection{Applications at the endpoint \texorpdfstring{$r=p+q$}{r=p+q}}

\begin{proposition}\label{prop:endpoint-applications}
Let $p>q>0$ be coprime and let $r=p+q$.  Then
$\Delta_{T(p,q;r,s)}(t)$ is monic and
\[
\operatorname{br}\Delta_{T(p,q;r,s)}(t)=
\begin{cases}
(p-1)(q-1)+sr(r-1),&s\geq0,\\[2mm]
(-s)r(r-1)-(p-1)(q-1)-2(r-1),&s<0.
\end{cases}
\]
If $s\geq0$, then $T(p,q;r,s)$ is fibered and its genus is one half of
the corresponding breadth.
\end{proposition}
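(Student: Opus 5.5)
The plan is to specialize Theorem~\ref{thm:alexander-polynomial} at $r=p+q$, using the normal form of Proposition~\ref{prop:endpoint-formula}, and to reduce $Z_aX_c-X_aZ_c$ to a sum of two Laurent polynomials with nonnegative coefficients. The case $q=1$ is treated separately first. By Section~\ref{sec:elementary-cases}, the knot is then the torus knot $T(r,rs+1)$. Monicity holds for torus knots, and the breadth is $(r-1)rs$ when $s\geq0$. When $s<0$ it is $(r-1)(|rs+1|-1)=(r-1)(-rs-2)$. Both agree with the stated formula at $q=1$, since then $(p-1)(q-1)=0$. From now on assume $q>1$.

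\textbf{Step 1 (collapsing the determinant).} With $(j_*,i_*)=(q-1,0)$, the polynomial $X_a$ consists of the first $q-1$ terms of $X_c$. Since $\overline L_{q-1}=r$, this gives
\[
X_c=X_a+t^{r^2s+p(q-1)}.
\]
Next compare the terms of $Z_a$ with those of $Z_c$. For $h\geq1$, use $\sum_{k=0}^{h-1}\widehat M_k=\overline M_{h-1}-1$ together with $E_a=rs(r-1)+q(p+1)$ and $E_c=r^2s+pq$. The exponent of the $h$-th term of $Z_a$ then equals the exponent of the term $m=h-1$ of $Z_c$, so
\[
Z_a=t^{E_a}+Z_c.
\]
Substituting both identities gives
\[
Z_aX_c-X_aZ_c=t^{E_a}X_c+t^{r^2s+p(q-1)}Z_c=:P(t).
\]
Every summand of $P$ is a monomial with coefficient $+1$, so no cancellation can occur. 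The top (respectively bottom) coefficient of $P$ is therefore the number of monomials attaining the maximal (respectively minimal) exponent.

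\textbf{Step 2 (passing to $\Delta$).} By Theorem~\ref{thm:alexander-polynomial}, $\Delta\doteq P/(1+t+\cdots+t^{r-1})$, a quotient by a monic polynomial. Hence $\Delta$ has the same extreme coefficients as $P$, and $\operatorname{br}\Delta=\operatorname{br}P-(r-1)$. So both monicity and the breadth reduce to locating the extreme exponents of $P$ and showing each is attained by exactly one monomial.

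\textbf{Step 3 (extreme exponents).} The exponents of $X_c$ are $e_m=rs\overline L_m+pm$ for $0\leq m<q$. The exponents of $Z_c$ are $f_m=E_c-rs\overline M_m-qm$ for $0\leq m<p$; their last value is $f_{p-1}=q$.

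For $s\geq0$ both sequences are strictly monotone, because $L_m,M_m\geq1$ in this range. Comparing the four candidate endpoints of the two shifted sequences, I expect the following. The minimum of $P$ is $r^2s+p(q-1)+q$, coming from the last term of $Z_c$. The maximum is $E_a+e_{q-1}=E_a+r^2s+p(q-1)$. Uniqueness requires checking that the other endpoint lies strictly inside, which amounts to an inequality between $rsL_0$ and $rs(M_0-1)$ plus linear terms in $p$ and $q$. The difference of the two extremes should be $rs(r-1)+q(p+1)-q=(p-1)(q-1)+sr(r-1)+(r-1)$ after using $r=p+q$. This yields the stated breadth.

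For $s<0$ the steps $rsL_m+p$ and $-rsM_m-q$ may change sign, since $L_m$ and $M_m$ take the values $\lfloor\cdot\rfloor$ and $\lfloor\cdot\rfloor+1$, and $M_m\in\{1,2\}$. Here I would use the explicit shape of the blocks at $r=p+q$. The relation $M_i=2\iff\tau_i<q$ and the palindrome identity of Proposition~\ref{prop:endpoint-formula} show that the extremes of $f_m$ occur at $m=0$ and $m=p-1$. The analogous description of the $L_j$ (values $\lceil r/q\rceil$ or $\lfloor r/q\rfloor$) locates the extremes of $e_m$. Comparing the four candidates should then give the breadth $(-s)r(r-1)-(p-1)(q-1)-2(r-1)+(r-1)$ for $P$. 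I expect this case to be the main obstacle. The danger is that an interior exponent, or the other shifted sequence, ties with an extreme; ruling this out requires the strict inequalities $rs\cdot(\text{block length})+p\neq0$, which hold because $r=p+q$ does not divide $p$ or $q$.

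\textbf{Step 4 (fiberedness and genus).} For $s\geq0$, the knot is the closure of the positive $(p+q)$-strand Lorenz braid of $T(p,q)$ followed by $s$ positive full twists, as noted after Theorem~\ref{thm:alexander-degree}. Stallings' theorem~\cite{Stallings-1978} then gives fiberedness, so $g=\tfrac12\operatorname{br}\Delta$.
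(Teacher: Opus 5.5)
Your reduction is the paper's: Step~1 reproduces exactly the identity $Z_aX_c-X_aZ_c=t^{E_a}X_c+t^{\xi}Z_c$ with $\xi=sr^2+p(q-1)$. Your treatment of $q=1$, of $s\geq0$, and of fiberedness also follows the paper. For $s\geq0$, the uniqueness check is two separate comparisons, not one inequality between $rsL_0$ and $rs(M_0-1)$: the lowest candidates differ by $p+rs(L_0-1)>0$, and the highest by $q+rs(M_0-1)>0$.

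\textbf{The gap is the case $s=-u<0$.} You leave it as an expectation, and the obstacle you anticipate does not exist.

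\emph{The steps cannot change sign.} Every block length satisfies $L_m,M_m\geq1$, and $ur\geq r>p>q$. Hence $e_{m+1}-e_m=p-urL_{m+1}\leq p-r<0$ and $f_{m+1}-f_m=urM_{m+1}-q\geq r-q>0$. Both monotonicities simply reverse, no palindrome argument is needed, and ties inside one sum are impossible.

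\emph{The missing step is the cross-comparison.} What decides the extremes is the comparison between the endpoints of the two different shifted sums. Your condition $rs\cdot(\text{block length})+p\neq0$ does not address this, and you never carry it out.
\begin{itemize}
\item Lowest candidates: $E_a+\xi$ (the $m=q-1$ term of $t^{E_a}X_c$) and $\xi+E_c+urM_0$ (the $m=0$ term of $t^{\xi}Z_c$). They differ by $ur(M_0-1)-q$.
\item Highest candidates: $\xi+q$ and $E_a-urL_0$. They differ by $ur(L_0-1)-p$.
\end{itemize}
The second difference is positive because $L_0=\lceil r/q\rceil\geq2$. The first is positive precisely because $M_0=\lfloor (p+q-1)/p\rfloor+1=2$. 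Were $M_0=1$, it would equal $-q$, and the lowest term would come from the other product.

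Granting these, the unique extreme exponents are $E_a+\xi$ and $\xi+q$, each with coefficient $1$. So $\operatorname{br}P=q-E_a=ur(r-1)-pq$. Subtracting $r-1$ gives the breadth $ur(r-1)-(p-1)(q-1)-2(r-1)$, and the unit extreme coefficients give monicity.
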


\begin{proof}
If $q=1$, the elementary-case formula identifies the knot with
$T(r,rs+1)$, and the assertions follow from the torus-knot formulas.
Assume that $q>1$.  Put
\[
\xi=sr^2+p(q-1).
\]
Since $j_*=q-1$, we have $X_c=X_a+t^\xi$.  Comparing the expression for
$Z_a$ in Proposition~\ref{prop:endpoint-formula} with $Z_c$ gives
$Z_a=Z_c+t^{E_a}$.  Hence
\begin{equation}\label{eq:endpoint-reduced-determinant}
Z_aX_c-X_aZ_c=t^{E_a}X_c+t^\xi Z_c.
\end{equation}

Suppose first that $s>0$.  The exponents in $X_c$ increase with the
summation index, whereas those in $Z_c$ decrease.  The difference between
the lowest possible exponent of $t^{E_a}X_c$ and that of $t^\xi Z_c$ is
\[
p+rs(L_0-1)>0,
\]
and the difference between the highest possible exponent of
$t^{E_a}X_c$ and that of $t^\xi Z_c$ is
\[
q+rs(M_0-1)>0.
\]
Thus the unique lowest term of the right side of
Equation~\eqref{eq:endpoint-reduced-determinant} has exponent $\xi+q$,
and its unique highest term has exponent $E_a+\xi$.  Both coefficients
are $1$, and therefore
\[
\operatorname{br}(Z_aX_c-X_aZ_c)=E_a-q=pq+sr(r-1).
\]

Now let $s=-u<0$.  The monotonicities reverse.  The differences that
separate the two possible lowest and highest terms are respectively
\[
ur(M_0-1)-q>0,
\qquad
ur(L_0-1)-p>0.
\]
Here $M_0=2$ and $L_0-1\geq2$ at the endpoint.  Consequently the unique
lowest term has exponent $E_a+\xi$, the unique highest term has exponent
$\xi+q$, and
\[
\operatorname{br}(Z_aX_c-X_aZ_c)=q-E_a=ur(r-1)-pq.
\]
In both cases, subtracting $r-1$ for the factor
$1+t+\cdots+t^{r-1}$ gives the asserted breadth.  The unique extreme
terms also show that the quotient is monic.  The case $s=0$ is the
torus-knot specialization.

Finally, for $s>0$, use the standard positive $(p+q)$-strand Lorenz braid
for $T(p,q)$ and append $s$ positive full twists on all $r=p+q$ strands.
This is a positive braid with
\[
c=pq+sr(r-1)
\]
crossings.  Its closure is fibered, and its canonical fiber surface has
\[
2g=c-r+1=(p-1)(q-1)+sr(r-1).
\]
For $s=0$, the same conclusion is the usual torus-knot genus formula.
\end{proof}

\begin{remark}
Proposition~\ref{prop:endpoint-applications} extends the breadth and
monicity theorems to $r=p+q$.  The searches reported in the Introduction
still exclude this endpoint, so their stated counts are unchanged.
\end{remark}

\subsection{Monicity in the strict complementary range}

The endpoint $r=p+q$ was established in
Proposition~\ref{prop:endpoint-applications}.  We therefore assume
$p<r<p+q$ throughout this subsection and define the complementary sums
\[
X_+(t)=\sum_{m=j_*}^{q-1}t^{rs\overline L_m+pm}
\]
and
\[
Z_-(t)=t^{E_c}\sum_{m=0}^{i_*-2}
t^{-rs\overline M_m-qm}.
\]
The bounds $j_*\leq q-2$ and $i_*\geq2$ proved in
Section~2 ensure that both displayed sums are
well-defined and that $X_+(t)$ is nonzero.

\begin{proof}[Proof of Theorem~\ref{thm:monicity}]
We first separate the two determinant terms in
Theorem~\ref{thm:alexander-polynomial}.  By definition,
\[
X_c(t)=X_a(t)+X_+(t).
\]
We next claim that
\[
Z_c(t)=Z_-(t)+Z_a(t).
\]
Indeed, the term of $Z_c(t)$ indexed by $m=i_*-1$ has exponent
\[
E_c-rs\overline M_{i_*-1}-q(i_*-1)
=q(p-i_*+1)+rs\sum_{k=i_*}^{p-1}M_k
=E_a.
\]
For $0\leq h<p-i_*$, the change in exponent between the terms of $Z_a(t)$
indexed by $h$ and $h+1$ is
\[
-q-rsM_{p-1-h},
\]
whereas the corresponding change between the terms of $Z_c(t)$ indexed by
$i_*-1+h$ and $i_*+h$ is
\[
-q-rsM_{i_*+h}.
\]
These changes agree by Lemma~\ref{lem:M-palindrome}.  This proves the claim.
Consequently,
\begin{equation}\label{eq:reduced-determinant}
Z_aX_c-X_aZ_c=Z_aX_+-X_aZ_-.
\end{equation}

Suppose first that $s=-u<0$.  Consecutive exponents in each of $X_a$ and
$X_+$ differ by
\[
p-ruL_{m+1}<0,
\]
and consecutive exponents in $Z_c$ differ by
\[
ruM_{m+1}-q>0.
\]
Likewise, as the summation index in $Z_a$ increases, its exponents increase
by $ruM_{p-1-h}-q>0$.  Thus every extreme term used below is unique.  The
highest term of $Z_a$ has exponent $q$, and the highest term of $X_+$ is its
term indexed by $j_*$.  If $j_*=0$, then $X_a=0$, so
Equation~\eqref{eq:reduced-determinant} already has a unique highest term of
coefficient $1$.

Assume that $j_*>0$.  Put
\[
A=\sum_{j=0}^{j_*}L_j,
\qquad
B=\sum_{k=i_*}^{p-1}M_k.
\]
Lemma~\ref{lem:truncation-identities} gives $A=B+1$ and
$pj_*+r=q(p-i_*+1)$.  The term of $Z_c$ indexed by $i_*-1$ has exponent
$E_a$, so the highest term of $Z_-$, indexed by $i_*-2$, has exponent
\[
E_a+q-ruM_{i_*-1}.
\]
It follows that
\[
\deg(Z_aX_+)-\deg(X_aZ_-)
=r\bigl(u(L_0+M_{i_*-1}-1)-1\bigr)>0.
\]
Here $L_0=\lceil r/q\rceil\geq2$ and $M_{i_*-1}\geq1$.  Hence the highest
term on the right side of Equation~\eqref{eq:reduced-determinant} is unique
and has coefficient $1$.

Now suppose that $s>0$.  The same monotonicity argument, with all directions
reversed, applies to the lowest terms.  If $j_*=0$, the conclusion is again
immediate.  If $j_*>0$, then
\[
\operatorname{mindeg}(X_aZ_-)-\operatorname{mindeg}(Z_aX_+)
=r\bigl(1+s(L_0+M_{i_*-1}-1)\bigr)>0.
\]
Thus the lowest term in Equation~\eqref{eq:reduced-determinant} is unique and
has coefficient $1$.

Finally,
\[
\frac{1-t^r}{1-t}=1+t+\cdots+t^{r-1}
\]
is monic and has constant term $1$.  Theorem~\ref{thm:alexander-polynomial}
therefore shows that the Alexander polynomial has an extreme coefficient of
absolute value $1$.  Since the Alexander polynomial of a knot is symmetric,
it is monic.  The remaining case $s=0$ follows from the torus-knot
specialization below.
\end{proof}

\begin{remark}[The case $s=0$]
When $s=0$, the twisted torus knot is the torus knot $T(p,q)$.  The formula
in Theorem~\ref{thm:alexander-polynomial} recovers its Alexander polynomial
directly.  Indeed, put $\ell_*=p-i_*+1$.  Lemma~\ref{lem:truncation-identities}
gives $q\ell_*=pj_*+r$, and the four sums become
\[
X_a(t)=\frac{1-t^{pj_*}}{1-t^p},
\qquad
X_c(t)=\frac{1-t^{pq}}{1-t^p},
\]
\[
Z_a(t)=t^q\frac{1-t^{q\ell_*}}{1-t^q},
\qquad
Z_c(t)=t^q\frac{1-t^{pq}}{1-t^q}.
\]
It follows that
\[
Z_a(t)X_c(t)-X_a(t)Z_c(t)
=
t^{q+pj_*}
\frac{(1-t^{pq})(1-t^r)}{(1-t^p)(1-t^q)}.
\]
Therefore,
\[
\Delta_{T(p,q;r,0)}(t)
\doteq
\frac{(1-t)(1-t^{pq})}{(1-t^p)(1-t^q)},
\]
which is the Alexander polynomial of $T(p,q)$.
\end{remark}

\subsection{Alexander degree and Seifert genus}

We next determine the breadth of the Alexander polynomial, or equivalently
the degree after normalizing its lowest exponent to zero.  This gives the
Seifert genus for $s\geq0$ and an explicit lower bound for $s<0$.

\begin{proof}[Proof of Theorem~\ref{thm:alexander-degree}]
The endpoint $r=p+q$ follows from
Proposition~\ref{prop:endpoint-applications}, so it remains to assume
$p<r<p+q$.
Put
\[
P(t)=Z_a(t)X_+(t),
\qquad
R(t)=X_a(t)Z_-(t),
\]
so that the numerator in Equation~\eqref{eq:reduced-determinant} is
$N(t)=P(t)-R(t)$.  Set $A=\sum_{j=0}^{j_*}L_j$,
$B=\sum_{k=i_*}^{p-1}M_k$, and $H=p-i_*$.  If $j_*=0$, then $R=0$
and $\operatorname{br}N=\operatorname{br}P$.  Suppose now that $j_*>0$.
The first and last exponents in the defining sums are as follows:
\[
\begin{array}{c|c|c}
 &\text{first exponent}&\text{last exponent}\\ \hline
Z_a&E_a&q\\
X_+&rsA+pj_*&rsr+p(q-1)\\
X_a&rsL_0&rs(A-L_{j_*})+p(j_*-1)\\
Z_-&E_c-rsM_0&E_a+q+rsM_{i_*-1}
\end{array}
\]
For $s>0$, the exponents increase in the $X$-sums and decrease in the
$Z$-sums; for $s<0$, these directions reverse.  Thus, for example,
\[
\begin{aligned}
\operatorname{maxdeg}P-\operatorname{maxdeg}R
={}&E_a+rsr+p(q-1)\\
 &{}-rs(A-L_{j_*})-p(j_*-1)-E_c+rsM_0\\
={}&r\bigl(1+s(L_{j_*}+M_0-1)\bigr)
\end{aligned}
\]
when $s>0$.  Here we used $A=B+1$ and
$pj_*+r=q(p-i_*+1)$ from Lemma~\ref{lem:truncation-identities}.
When $s=-u<0$, the opposite endpoint comparison is
\[
\begin{aligned}
\operatorname{mindeg}R-\operatorname{mindeg}P
={}&-ru(A-L_{j_*})+p(j_*-1)+E_c+ruM_0\\
 &{}-E_a+rur-p(q-1)\\
={}&r\bigl(u(L_{j_*}+M_0-1)-1\bigr).
\end{aligned}
\]
Together with the two comparisons already proved for monicity, these give,
for $s>0$,
\begin{align*}
\operatorname{mindeg}R-\operatorname{mindeg}P
&=r\bigl(1+s(L_0+M_{i_*-1}-1)\bigr)>0,\\
\operatorname{maxdeg}P-\operatorname{maxdeg}R
&=r\bigl(1+s(L_{j_*}+M_0-1)\bigr)>0.
\end{align*}
For $s=-u<0$,
\begin{align*}
\operatorname{maxdeg}P-\operatorname{maxdeg}R
&=r\bigl(u(L_0+M_{i_*-1}-1)-1\bigr)>0,\\
\operatorname{mindeg}R-\operatorname{mindeg}P
&=r\bigl(u(L_{j_*}+M_0-1)-1\bigr)>0.
\end{align*}
The last quantity is positive because $M_0=\lceil r/p\rceil=2$.
Consequently, when $j_*>0$, the exponent interval of $R$ lies strictly
inside that of $P$.  Hence, in all cases,
\[
\operatorname{br}N=\operatorname{br}P
=\operatorname{br}Z_a+\operatorname{br}X_+.
\]
Lemma~\ref{lem:truncation-identities} gives $A=B+1$ and
$pj_*+r=q(H+1)$.  If $s>0$, monotonicity of the exponents gives
\begin{align*}
\operatorname{br}Z_a&=qH+rsB,\\
\operatorname{br}X_+&=p(q-1-j_*)+rs(r-A).
\end{align*}
Therefore
\[
\operatorname{br}N=p(q-1-j_*)+qH+rs(r-1)
=(p-1)(q-1)+(r-1)+s r(r-1).
\]
For $s=-u<0$, the reversed monotonicity gives
\[
\operatorname{br}N=(ruB-qH)+\bigl(ru(r-A)-p(q-1-j_*)\bigr)
=ur(r-1)-(p-1)(q-1)-(r-1).
\]
Since
\[
N(t)=(1+t+\cdots+t^{r-1})\Delta_K(t)
\]
up to a unit, subtracting $r-1$ from the breadth of $N$ proves both
formulas.  The case $s=0$ follows from the torus-knot specialization.

For the fiberedness assertion, we use the $r$-strand braid presentation
in Lee~\cite[proof of Lemma~2.4, Figures~3(b) and~4]{Lee-2015}, which
explicitly treats $r>p$.  The isotopy in Figure~4 carries the twisting
circle to a circle around all $r$ braid strands.  In Figure~3(b), the
resulting braid consists of the full-twist block $(r,rs)$, a torus-braid
block $(p+q-r,p+q-r)$, and two positive crossings of parallel packets.
The latter contribute $(r-p)p$ and $(p+q-r)(r-q)$ crossings,
respectively.  All these blocks are positive for $s>0$.  Although Lee
uses this diagram with $s=2,3$ in that proof, the isotopy is independent
of $s$, and positivity requires only $s>0$.  Thus $K$ is the closure of
a positive $r$-strand braid, hence is fibered by
Stallings' theorem~\cite{Stallings-1978}.  The total crossing number of
this braid is $c=sr(r-1)+pq-p-q+r$, so its canonical fiber surface gives
\[
2g(K)=c-r+1=(p-1)(q-1)+sr(r-1).
\]
This agrees with the breadth formula above and independently verifies the
genus formula for $s>0$.  For $s=0$, the same genus formula holds for the
fibered torus knot $K=T(p,q)$.
\end{proof}

\section{Non-\texorpdfstring{$L$}{L}-space families in the complementary range}

We now use the Alexander-polynomial formula and the reduced determinant
obtained above to study interior coefficients.  Unlike the arguments of the
preceding section, which concern only the extreme terms, the $L$-space
obstruction requires a more detailed coefficient calculation.  For an
$L$-space knot, all nonzero Alexander-polynomial coefficients are $\pm1$ and
alternate in sign; hence a coefficient of absolute value greater than one
obstructs the $L$-space property.  We use the reduced determinant in
Equation~\eqref{eq:reduced-determinant} to locate the first such coefficient
for both signs of the twisting parameter.  To obtain uniform two-parameter
families, we specialize to $(p,q,r)=(2n+1,2n,3n)$.  In this case
$i_*=j_*=n$, and the four factors in the reduced determinant become
geometric sums, making the coefficient calculation explicit.  The position
and sign of the first obstructing coefficient will also distinguish the
knots when the Alexander breadth alone does not.  The same coefficient-
extraction procedure applies throughout $p<r<p+q$; simple block patterns
such as these make uniform coefficient obstructions accessible.

\begin{lemma}\label{lem:family-initial-coefficients}
For $K=T(2n+1,2n;3n,s)$, normalize its Alexander polynomial
$Q(t)=\sum_{j\geq0}a_jt^j$ to have lowest exponent zero and positive
constant term.  Then the first coefficient of absolute value greater
than one is as follows:
\[
\begin{array}{c|c|c}
\text{parameters}&J=\min\{j:|a_j|>1\}&a_J\\ \hline
s=-u,\quad u\geq1,\ n\geq3&4(3u-2)n&2\\
s=u,\quad u\geq2,\ n\geq4&(15u+9)n+3&-2\\
s=1,\quad n\geq5&29n+4&-2
\end{array}
\]
\end{lemma}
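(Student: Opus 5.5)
The plan is to compute everything explicitly from the reduced determinant in Equation~\eqref{eq:reduced-determinant}, $N=Z_aX_+-X_aZ_-$, and then divide by $1+t+\cdots+t^{r-1}$ with $r=3n$.

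\textbf{Step 1: residue data.} For $(p,q,r)=(2n+1,2n,3n)$ we have $p\equiv1\pmod q$, so $p^{-1}\equiv1$ and $j_*=[-3n]_{2n}=n$. Then $\ell_*=(pn+3n)/(2n)=n+2$, so $i_*=n$. Next, $\mu_j=[-j]_{2n}$, which gives $\mu_0=0$ and $\mu_j=2n-j$ for $j\ge1$. Hence $L_0=2$, $L_j=1$ for $1\le j\le n$, and $L_j=2$ for $n<j<2n$.

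Similarly, $\tau_i=[-2ni]_{2n+1}=[i]_{2n+1}=i$. With $\delta=r-p=n-1$, Lemma~\ref{lem:M-palindrome} gives $M_i=2$ for $i<n-1$ and $M_i=1$ otherwise.

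Substituting, $X_a$, $X_+$, $Z_a$ and $Z_-$ each become a geometric progression, possibly with one irregular endpoint term:
\begin{itemize}
\item $X_a$ has $n$ terms with ratio $t^{rs+p}$, apart from a shifted first term;
\item $X_+$ has $n$ terms with ratio $t^{2rs+p}$;
\item $Z_a$ has $n+2$ terms with ratio $t^{-(q+rs)}$;
\item $Z_-$ has $n-1$ terms with ratio $t^{-(q+2rs)}$.
\end{itemize}
I will write each one in closed form as a quotient such as $t^{c}(1-t^{kd})/(1-t^{d})$.

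\textbf{Step 2: expand and divide by $1+t+\cdots+t^{r-1}$.} I will not invert the division. Instead, I multiply $N$ by $(1-t)$ and read off
\[
\Delta\doteq N(1-t)/(1-t^{r}),
\]
so that the coefficients of $\Delta$ are obtained from those of $N(1-t)$ by the recursion
\[
a_j=b_j+a_{j-r}.
\]
Since only the first obstructing coefficient is needed, it suffices to know the low-degree part of $N$ up to exponent about $J$.

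For $s=-u<0$, the proof of Theorem~\ref{thm:alexander-degree} shows that the lowest exponents come from $P=Z_aX_+$, and that $R$ enters only after a gap of $r\bigl(u(L_{j_*}+M_0-1)-1\bigr)=r(2u-1)$. For $s>0$, the lowest part of $P$ begins near $\xi$-type exponents and $R$ enters after a gap of $r\bigl(1+s(L_0+M_{i_*-1}-1)\bigr)$.

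So the plan is to list the exponents of $P$ in increasing order as a double sum of two arithmetic progressions (from $Z_a$ and $X_+$), together with the first few exponents of $R$. I then track where two exponents coincide, or where a term of $R$ cancels or reinforces a term of $P$, and pass the result through the recursion $a_j=b_j+a_{j-3n}$.

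\textbf{Step 3: locating the coefficient.} For $s=-u$, I expect the first collision to occur where two different index pairs $(h,m)$ of $P$ give the same exponent. The step sizes in the two progressions are $q-ru\cdot M$ and $ruL-p$, with $M$ and $L$ taking the two values $1$ and $2$. Solving the resulting linear Diophantine coincidence should give the stated $J=4(3u-2)n$ relative to the lowest exponent, and the recursion should contribute a single additional $+1$ to an existing $+1$, giving $a_J=2$.

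For $s=u>0$, I expect instead that the first entry of $R$, which carries a minus sign, together with an accumulated $-1$ from the periodic recursion, produces $-2$ at $J=(15u+9)n+3$. The case $u=1$ is separate because there the gap $r(1+s(\dots))$ and the step sizes are small enough that a different, earlier-in-pattern collision governs, giving $29n+4$. The lower bounds on $n$ ($n\ge3,4,5$) should be exactly the conditions under which the irregular endpoint terms of $X_a$ and $Z_a$ stay beyond $J$.

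\textbf{Main obstacle.} The hardest step is Step 3: proving that no coefficient of absolute value at least $2$ appears before $J$. This requires a complete case analysis of all exponent coincidences below $J$, modulo the period $3n$ of the recursion. I will organize it by writing each exponent of $P$ and $R$ in the form $\alpha n+\beta$ with small integers $\alpha$ and $\beta$, and comparing these forms linearly in $n$ for $n$ large. A direct check of the finitely many small $n$ at each boundary value can serve as a sanity test, though not as a substitute for the uniform argument.
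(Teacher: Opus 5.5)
\emph{Verdict: genuine gap.} Your Steps 1 and 2 are correct and match the paper's setup: $j_*=i_*=n$, $L_0=2$, $L_1=\cdots=L_n=1$, $L_{n+1}=\cdots=L_{2n-1}=2$, and $M_i=2$ exactly for $i\leq n-2$. The gaps $3n(2u-1)$ and $3n(2u+1)$ before $R$ enters are also right, as is the recursion $a_j=b_j+a_{j-3n}$ on the coefficients of the normalized $N(1-t)$. The trouble is that the whole content of the lemma sits in Step 3, which you leave as expectations, and those expectations misdescribe the mechanism in each case.

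All four sums are exact geometric progressions; there is no irregular term. For instance, $X_a=t^{2rs}\mathcal S_{n-1}(rs+p)$ with $\mathcal S_m(v)=\sum_{h=0}^m t^{hv}$. After normalization, $N$ becomes $\mathcal S_{n+1}(d)\mathcal S_{n-1}(e)-t^{\eta}\mathcal S_{n-1}(d\mp1)\mathcal S_{n-2}(e\pm1)$. Here $(d,e,\eta)=\bigl((3u-2)n,\,(6u-2)n-1,\,(6u-3)n\bigr)$ with upper signs for $s=-u$, and $(d,e,\eta)=\bigl((3u+2)n,\,(6u+2)n+1,\,(6u+3)n\bigr)$ with lower signs for $s=u$. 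With this in hand, your predicted mechanisms fail as follows.
\begin{itemize}
\item For $s=-u$, the exponents of $P$ up to $J=4d$ lie among $0,d,2d,3d,4d,e,e+d$. These are pairwise distinct, since $d\equiv0$ and $e\equiv-1\pmod n$, so the coincidence inside $P$ you plan to solve for does not exist. The value $2$ arises only through the recursion, in residue class $n$ modulo $3n$. It comes from increments $+1$ at $d$, $-1$ at $e+1$, $+1$ at $\eta+d$ (from $R$), and $+1$ at $4d$.
\item For $s=u\geq2$, the $-2$ lies in residue class $3$. It comes from $-t^{2e+d+1}$, the $(-t)$-shift of a term of $P$, followed by $-t^{\eta+3(d+1)}$. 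That is the fourth term of the leading row of $R$, not its first entry.
\item For $u=1$ the obstruction comes later, not earlier. Since $3e=\eta+3(d+1)$ holds exactly when $u=1$, the would-be obstruction at $24n+3$ cancels. The actual one, at $29n+4$, is a doubled coefficient of $N(1-t)$ produced by $-t^{\eta+4(d+1)}$ and $-t\cdot t^{3e+d}$.
\end{itemize}

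To close the gap, you must do what the paper does, in each of the three cases.
\begin{itemize}
\item List every term of the normalized numerator of degree at most $J$.
\item Prove, by explicit inequalities valid for all admissible $n$ and $u$, that every omitted term of either product has degree greater than $J$.
\item Write exponents as $3n\kappa+\rho$ and tabulate the $\pm1$ increments of $N(1-t)$ by residue $\rho$ and time $\kappa$. The times are linear in $u$ and must be ordered uniformly in $u$.
\item Check that every partial sum stays in $\{0,\pm1\}$ before $J$.
\end{itemize}
This is also where the hypotheses $n\geq3,4,5$ enter, not through irregular endpoint terms. They guarantee three things:
\begin{itemize}
\item the needed indices exist in the finite sums $\mathcal S_{n-1}$ and $\mathcal S_{n-2}$ (for $u=1$, the decisive term $-t^{\eta+4(d+1)}$ requires $n-1\geq4$);
\item the omitted terms lie beyond $J$;
\item the occupied residues, which are small offsets of $0$, $n$ and $2n$, are distinct in $[0,3n)$.
\end{itemize}
Because $u$ is unbounded, comparing forms $\alpha n+\beta$ for large $n$ and checking small $n$ separately cannot replace these uniform estimates.
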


\begin{proof}
The indices and block lengths in Theorem~\ref{thm:alexander-polynomial} are
\[
\begin{aligned}
j_*&=i_*=n,\\
(L_0,\ldots,L_{2n-1})
&=(2,\underbrace{1,\ldots,1}_{n},\underbrace{2,\ldots,2}_{n-1}),\\
(M_0,\ldots,M_{2n})
&=(\underbrace{2,\ldots,2}_{n-1},\underbrace{1,\ldots,1}_{n+2}).
\end{aligned}
\]
For an integer $m\geq0$, write
$\mathcal S_m(v)=\sum_{h=0}^m t^{hv}$.  Reading the four sums in
Equation~\eqref{eq:reduced-determinant} with these block lengths gives
\[
\begin{aligned}
Z_a&=t^q\mathcal S_{n+1}(rs+q),\\
X_+&=t^{rs(n+2)+pn}\mathcal S_{n-1}(2rs+p),\\
X_a&=t^{2rs}\mathcal S_{n-1}(rs+p),\\
Z_-&=t^{E_c-2rs}\mathcal S_{n-2}(-2rs-q).
\end{aligned}
\]
Let $N=Z_aX_+-X_aZ_-$ and define the shifts
\[
E_-=(6-12u)n^2+(4-3u)n-1,\qquad
E_+=(3u+2)n^2+(6u+3)n.
\]
For $s=-u$, reverse the sums with negative steps and set
\[
d_-=(3u-2)n,\quad e_-=(6u-2)n-1,\quad \eta_-=(6u-3)n.
\]
This gives the exact identity
\[
F_-:=t^{-E_-}N
=\mathcal S_{n+1}(d_-)\mathcal S_{n-1}(e_-)
-t^{\eta_-}\mathcal S_{n-1}(d_--1)\mathcal S_{n-2}(e_-+1).
\]
For $s=u>0$, set
\[
d_+=(3u+2)n,\quad e_+=(6u+2)n+1,\quad \eta_+=(6u+3)n.
\]
Reversing the sum in $Z_-$ instead gives
\[
F_+:=t^{-E_+}N
=\mathcal S_{n+1}(d_+)\mathcal S_{n-1}(e_+)
-t^{\eta_+}\mathcal S_{n-1}(d_++1)\mathcal S_{n-2}(e_+-1).
\]
All steps and shifts in these two expressions are positive, and the
constant term is $1$.  Thus $E_\pm$ are indeed the lowest exponents,
and $F_\pm=(1+t+\cdots+t^{3n-1})Q$.

Here are all numerator terms needed through the corresponding degree
$J$ in the statement; each congruence is modulo $t^{J+1}$:
\begin{align}
F_-&\equiv\mathcal S_4(d_-)+t^{e_-}\mathcal S_1(d_-)
-t^{\eta_-}\mathcal S_1(d_--1),
&&J=4d_-;\label{eq:negative-numerator-initial}\\
F_+&\equiv\mathcal S_4(d_+)+t^{e_+}\mathcal S_3(d_+)
+t^{2e_+}\mathcal S_1(d_+)\notag\\
&\quad-t^{\eta_+}\bigl(\mathcal S_3(d_++1)
+t^{e_+-1}\mathcal S_1(d_++1)\bigr),
&&J=(15u+9)n+3,\ u\geq2;\label{eq:positive-numerator-initial}\\
F_+&\equiv\mathcal S_5(d_+)+t^{e_+}\mathcal S_4(d_+)
+t^{2e_+}\mathcal S_2(d_+)+t^{3e_+}\mathcal S_1(d_+)\notag\\
&\quad-t^{\eta_+}\bigl(\mathcal S_4(d_++1)
+t^{e_+-1}\mathcal S_2(d_++1)+t^{2(e_+-1)}\bigr),
&&J=29n+4,\ u=1.\label{eq:positive-one-numerator-initial}
\end{align}
These lists follow by fixing the index of the second geometric sum and
bounding the first index.  In the negative case the allowed first indices
are $0,\ldots,4$ for the first product at second index $0$, at most
$0,1$ at second index $1$, and at most $0,1$ for the second product at
second index $0$.  All other terms exceed $J$; when $u=1$, the term
$t^{e_-+d_-}$ also exceeds $J$.
For $u\geq2$ in the positive case, the first product has first-index
bounds $4,3,1$ at second indices $0,1,2$, and the second product has bounds
$3,1$ at second indices $0,1$.
For $u=1$, these bounds are respectively $5,4,2,1$ at second indices
$0,1,2,3$, and $4,2,0$ at second indices $0,1,2$.
The hypotheses on $n$ ensure that all these indices belong to the original
finite sums.
For completeness, lower bounds for the exponents of terms omitted from
these expressions, in the first and second products respectively, are
\[
\begin{array}{c|c|c}
 &\text{first product}&\text{second product}\\ \hline
s=-u&\min\{5d_-,e_-+2d_-\}&\eta_-+2(d_--1)\\
s=u,\ u\geq2&5d_+&\eta_++2(e_+-1)\\
s=1&6d_+&\eta_++2(e_+-1)+d_++1
\end{array}
\]
Subtracting the corresponding $J$ gives, row by row,
\[
\bigl(\min\{d_-,2n-1\},\,n-2\bigr),\qquad
\bigl(n-3,\,(3u-2)n-3\bigr),\qquad
\bigl(n-4,\,n-3\bigr),
\]
whose entries are positive under the stated hypotheses.  These bounds
remain valid if a candidate omitted index lies beyond the original finite
sum, since excluding such a term can only raise the minimum.

To check both the first obstruction and the absence of earlier ones,
write $F_\pm=\sum b_jt^j$, with all negative-index coefficients zero.
Then
\begin{equation}\label{eq:non-l-space-recurrence}
a_j=b_j-b_{j-1}+a_{j-3n}
=\sum_{k\geq0}(b_{j-3nk}-b_{j-3nk-1}).
\end{equation}
The following tables make this coefficient calculation explicit.  For a
residue $0\leq\rho<3n$, the entry $\kappa^+$ denotes an increment $+1$
in $a_{3n\ell+\rho}$ at $\ell=\kappa$, and $\kappa^-$ denotes an
increment $-1$; a superscript $-2$ denotes an increment $-2$.
Increments at the same time are added, and those whose degree exceeds
$J$ are ignored.  Each coefficient is the cumulative sum of these
increments, starting from zero.  Every residue not listed has zero
coefficients through $J$.

For $s=-u$, expanding Equation~\eqref{eq:negative-numerator-initial}
and its multiple by $-t$ gives
\[
\begin{array}{c|l}
\rho&\text{increment times}\\ \hline
0&(0)^{+},\ (2u-1)^{-},\ (3u-2)^{+}\\
1&(0)^{-},\ (2u-1)^{+},\ (3u-2)^{-}\\
n-1&(2u-1)^{+},\ (3u-2)^{-}\\
n&(u-1)^{+},\ (2u-1)^{-},\ (3u-2)^{+},\ (4u-3)^{+}\\
n+1&(u-1)^{-},\ (4u-3)^{-}\\
2n-1&(3u-2)^{+}\\
2n&(2u-2)^{+},\ (3u-2)^{-}\\
2n+1&(2u-2)^{-}\\
\end{array}
\]
Since $n\geq3$, the listed residues are distinct and lie in $[0,3n)$.
Before $J$, the cumulative sums in each row lie in $\{0,\pm1\}$.
At $J=3n(4u-3)+n$, the row $\rho=n$ first reaches $2$.
For $u=1$, the increments at coincident times cancel as indicated by the
table; equivalently, $t^{3d_-}$ cancels $t^{\eta_-}$ and $t^{e_-}$ cancels
$t^{\eta_-+d_--1}$ in the numerator.

For $s=u\geq2$, Equation~\eqref{eq:positive-numerator-initial} gives
\[
\begin{array}{c|l}
\rho&\text{increment times}\\ \hline
0&(0)^{+},\ (2u+1)^{-},\ (3u+2)^{+}\\
1&(0)^{-},\ (2u+1)^{+},\ (3u+2)^{-},\ (4u+2)^{+}\\
2&(4u+2)^{-},\ (5u+2)^{+}\\
3&(5u+2)^{-},\ (5u+3)^{-}\\
n&(2u+1)^{+}\\
n+1&(2u+1)^{-},\ (3u+1)^{+},\ (5u+2)^{-}\\
n+2&(3u+1)^{-},\ (4u+1)^{+},\ (4u+2)^{-},\ (5u+2)^{+}\\
n+3&(4u+1)^{-},\ (4u+2)^{+}\\
2n&(u)^{+},\ (4u+1)^{-},\ (4u+2)^{+}\\
2n+1&(u)^{-},\ (2u)^{+},\ (3u+1)^{-},\ (4u+1)^{+},\ (4u+2)^{-},\ (5u+2)^{+}\\
2n+2&(2u)^{-},\ (3u+1)^{+},\ (5u+2)^{-}\\
\end{array}
\]
Since $n\geq4$, the listed residues are distinct and lie in $[0,3n)$.
The entries in each row are in increasing order for $u\geq2$.
Every row has cumulative sums in $\{0,\pm1\}$ until
$J=3n(5u+3)+3$, where the row $\rho=3$ reaches $-2$.
The omitted increment at $\rho=4$, $\kappa=5u+3$, is at degree $J+1$
and does not affect the calculation, including when $n=4$.

Finally, for $s=1$, Equation~\eqref{eq:positive-one-numerator-initial}
gives
\[
\begin{array}{c|l|c|l}
\rho&\text{increment times}&\rho&\text{increment times}\\ \hline
0&(0)^{+},\ (3)^{-},\ (5)^{+}&n+3&(5)^{-},\ (6)^{+}\\
1&(0)^{-},\ (3)^{+},\ (5)^{-},\ (6)^{+}&2n&(1)^{+},\ (5)^{-},\ (6)^{+}\\
2&(6)^{-},\ (7)^{+},\ (9)^{-}&2n+1&(1)^{-},\ (2)^{+},\ (4)^{-},\ (5)^{+},\ (6)^{-},\ (7)^{+}\\
3&(7)^{-},\ (9)^{+}&2n+2&(2)^{-},\ (4)^{+},\ (7)^{-},\ (8)^{+}\\
n&(3)^{+}&2n+3&(8)^{-},\ (9)^{+}\\
n+1&(3)^{-},\ (4)^{+},\ (7)^{-},\ (9)^{+}&2n+4&(9)^{-2}\\
n+2&(4)^{-},\ (5)^{+},\ (6)^{-},\ (7)^{+},\ (9)^{-}&&\\
\end{array}
\]
Again all cumulative sums before $J$ lie in $\{0,\pm1\}$, while the
row $\rho=2n+4$ reaches $-2$ at $\kappa=9$, or $J=29n+4$.
The next increment is at $J+1$ and is irrelevant.  Since $n\geq5$,
all listed residues are distinct and lie in $[0,3n)$.
This proves all three assertions.
\end{proof}

\begin{proof}[Proof of Theorem~\ref{thm:non-l-space-family}]
Lemma~\ref{lem:family-initial-coefficients} and the coefficient obstruction
prove the non-$L$-space assertions.  The opposite signs of the first
obstructing coefficients distinguish the two twist signs.  To distinguish
parameters within each sign, define
\[
\nu(\Delta)=\min\{j:|[t^j]\Delta^{\mathrm{norm}}(t)|>1\},
\]
where $\Delta^{\mathrm{norm}}$ has lowest exponent zero and positive
constant term, and $[t^j]f(t)$ denotes the coefficient of $t^j$ in $f(t)$.
For the negative-twist family, set $x=\nu/4=n(3u-2)$.  Theorem~\ref{thm:alexander-degree} becomes
\[
\operatorname{br}\Delta=(3n-1)x+2n^2-6n+2.
\]
For fixed $x$, the right side at $n+1$ minus its value at $n$ is
$3x+4n-4>0$.  Hence the polynomial determines $n$, and then
$u=(x/n+2)/3$.

For the positive-twist family with $u\geq2$, set
$x=(\nu-3)/3=n(5u+3)$.  The breadth formula becomes
\[
\operatorname{br}\Delta
=\frac{(9n-3)x-7n^2-n}{5}.
\]
For every admissible $n$ we have $x\geq13n$, so its numerator increases
by $9x-14n-8>0$ when $n$ is replaced by $n+1$,
so again $n$, and then $u=(x/n-3)/5$, is determined.  In the subfamily $u=1$, write its parameter as $n'$.  Then
$\nu=29n'+4$ determines $n'$ directly.  A polynomial
from this subfamily cannot equal one from the $u\geq2$ subfamily: equating
both $\nu$ and the breadth first gives
$29n'+4=(15u+9)n+3$.  Eliminating $n'$ from this identity and
the breadth equality gives
\[
n^2(2925u^2-4059u-2311)+n(143-42u)+158=0,
\]
whose left side, upon setting $v=u-2\geq0$, is
\[
2925n^2v^2+n(7641n-42)v+1271n^2+59n+158>0.\qedhere
\]
\end{proof}

\bibliographystyle{alpha}
\bibliography{references-ttk}

\end{document}